\DeclareMathOperator*{\argmin}{arg\,min}
\DeclareMathOperator{\Var}{var_\mu}
\DeclareMathOperator{\Cov}{cov_\mu}
\DeclareMathOperator{\RQ}{\mathcal{Q}}
\DeclareMathOperator{\RmapNN}{\mathcal{R}}
\newcommand{\ESuperscript}{{\mathrm{data}, \mathcal{I}}}
\newcommand{\ESuperscriptJ}{{\mathrm{data}, \mathcal{I}^{(\mathnormal{j})}}}
\DeclareMathOperator{\EVar}{\Var^\ESuperscript}
\DeclareMathOperator{\ECov}{{\Cov^{\ESuperscript}}}
\DeclareMathOperator{\EE}{\mathbf{E}^{\ESuperscript}}
\DeclareMathOperator{\ERQ}{\RQ^{\ESuperscript}} 
\DeclareMathOperator{\EEJ}{\mathbf{E}^{\ESuperscriptJ}}
\DeclareMathOperator{\ERQJ}{\RQ^{\ESuperscriptJ}}
\newtheorem{remark}{Remark}
\newtheorem{prop}{Proposition}
\newtheorem{assump}{Assumption}
\newtheorem{lemma}{Lemma}
\newtheorem{thm}{Theorem}
\begin{document}
  \title{Solving eigenvalue PDEs of metastable diffusion processes using artificial neural networks}
  \date{}
  \author{
     Wei Zhang~\thanks{Zuse Institute Berlin, Takustrasse 7, 14195 Berlin, Germany.\  Email: wei.zhang@fu-berlin.de}
\and
    Tiejun Li~\thanks{Laboratory of Mathematics and Applied Mathematics (LMAM)
    and School of Mathematical Sciences, Peking University, Beijing 100871. P.R. China.\ Email: tieli@pku.edu.cn} 
    \and
  Christof Sch\"utte~\thanks{Institut f\"ur Mathematik, Freie Universit\"at Berlin and Zuse Institute Berlin, D-14195 Berlin, Germany; christof.schuette@fu-berlin.de} 
  }
  \maketitle
  \begin{abstract}
    In this paper, we consider the eigenvalue PDE problem of the infinitesimal generators of metastable diffusion processes. We propose a numerical
    algorithm based on training artificial neural networks for solving the leading eigenvalues and eigenfunctions of such high-dimensional eigenvalue problem. 
The algorithm is able to find multiple leading eigenpairs by solving a single training task.
  It is useful in understanding the dynamical behaviors of metastable processes on large timescales.  We demonstrate the
    capability of our algorithm on a high-dimensional model problem, and on the simple molecular system alanine dipeptide.
  \end{abstract}

  \begin{keywords}
   eigenvalue PDE, metastable process, molecular dynamics, artificial neural network, variational characterization
  \end{keywords}
 

  \section{Introduction}
  \label{sec-intro}
Understanding the dynamics of molecular systems is often a challenging task
  due to the high dimensionality of the systems and their extraordinarily
  complex dynamical behavior. In the last decades, considerable amount of efforts have been devoted to
  developing high-performance numerical packages and new simulation
  techniques, leading to the rapid advance of the capability of molecular
  dynamics simulations in generating trajectory data. At the same time, many data-based numerical
  approaches have emerged, which allow to efficiently study the molecular
  kinetics through analyzing the trajectory data obtained from molecular dynamics simulations. 
  A large class of these approaches for trajectory data analysis are based on the theory of the transfer
  operator~\cite{SchuetteHuisingaDeuflhard2001} or Koopman operator~\cite{Koopmanism}, hence termed operator-based approaches, in which
  the molecular system is analyzed by estimating the dominant eigenvalues and
  their corresponding eigenfunctions of the operator that is discretized using trajectory data. Notable examples are the variational approach to conformational
  dynamics~\cite{frank_feliks_mms2013,feliks_variational_jctc_2014} and its
  linear version, time lagged independent component analysis
  (tICA)~\cite{tica}, the variational approach for Markov processes
  (VAMP)~\cite{vamp}, the extended dynamic mode decompositions~\cite{edmd,klus-koopman,EDMD-Klus}, Markov state models (MSMs)~\cite{msm_generation,CHODERA-noe-msm-2014,husic-pande-msm}, etc. Recent development in these directions includes the kernel-tICA~\cite{kernel-tica} using kernel method, the deep learning frameworks VAMPNets~\cite{vampnet} and the state-free reversible VAMPNets (SRVs)~\cite{state-free-vampnets} for molecular kinetics.

  In contrast to the transfer operator and the Koopman operator, the infinitesimal generator (henceforth called generator) is
  a key operator of a molecular system that does not rely on the choice of the lag time. Similar to the
  aforementioned operator-based approaches using the transfer operator or the
  Koopman operator, crucial information on the underlying dynamics can be
  obtained by analyzing the system's generator as well. For instance, the
  leading eigenvalues of the generator encode the dominant timescales of
  metastable molecular systems, whereas the corresponding eigenfunctions are
  in fact good collective variables for constructing low-dimensional
  coarse-grained dynamics~\cite{effective_dyn_2017}. In a broader context,
  many probabilistic quantities of a Markov process can be represented as the solution to certain
  partial differential equation (PDE) that involves the system's generator~\cite{oksendalSDE,lelievre_stoltz_2016}. This fact has inspired the PDE-based approaches, which have been successfully applied in analyzing
  various aspects of Markov processes, such as metastability~\cite{metastability-bovier,LeBrisLelievreLuskinPerez+2012+119+146},
  transition paths~\cite{tpt_eric}, and more recently the model reduction of molecular
  dynamics~\cite{effective_dyn_2017}. Moreover, data-based methods for estimating the eigenvalues and eigenfunctions of the generator are available~\cite{sz-entropy-2017}. 

  The overdamped Langevin dynamics~\cite{oksendalSDE,Pavliotis2014} is often
  adopted in modelling molecular dynamics in equilibrium due to its simplicity and nice properties for mathematical analysis. In this context, we consider a smooth potential function $V: \mathbb{R}^d \rightarrow \mathbb{R}$ in state space~$\mathbb{R}^d$, a matrix-valued
  function $\sigma: \mathbb{R}^d\rightarrow \mathbb{R}^{d\times d_1}$, where
  $d, d_1$ are two integers such that $1 \le d \le d_1$, and we define the
  function $a: \mathbb{R}^d\rightarrow \mathbb{R}^{d\times d}$ by~$a =
  \sigma\sigma^T$. The entries of the matrix $a(x)$ at $x \in \mathbb{R}^d$ are $a_{ij}(x)$, where $1 \le i, j \le d$. Then, the overdamped Langevin dynamics is described by the stochastic differential equation (SDE)
  \begin{align}
    dx(s) = - a(x(s)) \nabla V(x(s))\,ds + 
    \frac{1}{\beta} (\nabla\cdot a)(x(s))\,ds +  
    \sqrt{2\beta^{-1}} \sigma(x(s))\, dw(s)
    \label{overdamped}
  \end{align}
   where $x(s)\in \mathbb{R}^d$ is the state of the system at time $s\in [0,
   +\infty)$, ${\beta > 0}$ is a constant proportional to the inverse of the
  system's temperature, $(w(s))_{s\ge 0}$ is a Brownian motion in
  $\mathbb{R}^{d_1}$, and $\nabla\cdot a: \mathbb{R}^d\rightarrow \mathbb{R}^d$
  denotes the vector-valued function whose components are given by $(\nabla\cdot a)_i(x) =
  \sum_{j=1}^d\frac{\partial a_{ij}}{\partial x_j}(x)$ for $i=1,\ldots, d$ and for all $x\in \mathbb{R}^d$.  
  The reason for including the term $\frac{1}{\beta} \nabla\cdot a$ in
  \eqref{overdamped} is to make sure that under certain assumptions (see Section~\ref{subsec-math-setting}) 
   the process \eqref{overdamped} is ergodic with respect to the unique invariant probability distribution~$\mu$, defined by
  \begin{align}
    \mu(dx) = \frac{1}{Z} e^{-\beta V(x)}\,dx\,,\quad ~x\in \mathbb{R}^d\,,   \label{mu-invariant}
  \end{align}
  where $Z=\int_{\mathbb{R}^d} e^{-\beta V(x)}\,dx$ is the normalizing constant.
  We refer to \cite[Section~5]{down1995}, \cite[Theorem~5.3]{Mattingly2002}, and~\cite[Theorem 1]{DuncanLelievrePavliotis16} 
  for sufficient conditions on ergodicity and to~\cite{oksendalSDE,Pavliotis2014} for a more detailed introduction to the SDE~\eqref{overdamped}.

  The main focus of this paper is the eigenvalue problem defined by the (high-dimensional) PDE 
  \begin{align}
    -\mathcal{L} \varphi = \lambda \varphi\,,\quad \mbox{in}~~\mathbb{R}^d
    \label{eigen-problem-rd}
  \end{align}
  associated to the generator $\mathcal{L}$ of SDE~\eqref{overdamped}, given by 
  \begin{align}
    \begin{split}
    \mathcal{L}f =& \frac{\mathrm{e}^{\beta V}}{\beta} \sum_{i,j=1}^d\frac{\partial}{\partial
      x_j}\left(\mathrm{e}^{-\beta V} a_{ij}\frac{\partial f}{\partial x_i} \right) \,,
    \end{split}
    \label{generator-l}
  \end{align}
  where $f:\mathbb{R}^d \rightarrow \mathbb{R}$ is a test function. 
  In particular, when $a$ is the identity matrix of size $d$, the generator $\mathcal{L}$ in \eqref{generator-l} has the well-known form
  \begin{align}
    \begin{split}
      \mathcal{L}f =& -\nabla V \cdot \nabla f + \frac{1}{\beta} \Delta f \,.
    \end{split}
    \label{generator-l-identity}
  \end{align}
Under mild conditions (see Section~\ref{subsec-math-setting}), the operator $\mathcal{L}$
  is self-adjoint with respect to a weighted inner product and it has purely
  discrete spectrum. Moreover, one can show that the eigenvalues of the
  problem \eqref{eigen-problem-rd} are all positive real numbers, except the
  trivial one ${\lambda_0=0}$ whose corresponding eigenfunction is
  $\varphi_0\equiv 1$. In this paper, we are interested in computing the first
  non-trivial $K$ eigenvalues (in non-decreasing order) and the corresponding eigenfunctions of \eqref{eigen-problem-rd} for some integer $K \ge 1$, i.e., the eigenpairs $\{(\lambda_i, \varphi_i)\}_{1\le i \le K}$, where 
  \begin{equation}
    0 = \lambda_0 < \lambda_1 \le \lambda_2 \le \dots \le \lambda_K \le \dots\,.
    \label{sequence-of-lambda}
  \end{equation}
Given $K \ge 1$, the main contribution of this paper is a novel numerical
method for computing the eigenpairs $\{(\lambda_i, \varphi_i)\}_{1\le i \le
K}$ of \eqref{eigen-problem-rd} by training artificial neural networks. 
Designing the loss function based on certain variational formulation of the
eigenpairs of \eqref{eigen-problem-rd}, we propose a training task which
computes multiple eigenpairs of the problem~\eqref{eigen-problem-rd} at once.
The method can be applied to solving high-dimensional eigenvalue PDEs (i.e.,
$d$ is large) where the operator $\mathcal{L}$ is of the general
form~\eqref{generator-l}. To overcome the metastability in sampling the
training data according to $\mu$, a reweighting technique is proposed, which
allows the use of biased training data sampled from a distribution other than $\mu$.

In the following let us mention several existing work on
related topics and discuss the novelty of the current work. Firstly,
the aforementioned numerical approaches based on the transfer operator or
the Koopman operator (see the discussion at the beginning of this
introduction) require a careful choice of the lag time~\cite{msm_generation}. 
In contrast, our method based on the generator does not involve the lag time. The
current work extends the data-based method using basis functions in~\cite{sz-entropy-2017} to the neural network framework. As a result, the algorithm in this work does not require the choice of basis functions, and therefore is
suitable for solving high-dimensional problems. We note that similar biased sampling
and reweighting technique have been employed
in~\cite{chasing-cv-autoencodes-biased-traj,committor-by-nn}. 
Secondly, beyond the study of dynamical systems, there has been growing research interest in recent years in developing deep learning-based numerical methods for solving high-dimensional PDEs, such as parabolic PDEs~\cite{Han-pde-pnas2018}, the committor equation (a boundary value elliptic PDE) in molecular dynamics~\cite{committor-by-nn}, and the eigenvalue PDEs~\cite{deep-ritz-weinan,HAN2020-eigenpde}. We also refer to the recent work~\cite{schroedinger-eigenpde,HAN2019-solve-many-electron-schroedinger-equation,dnn-electron-schroedinger-noe,ab-initio-many-electron-schrodinger-dnn}
for deep learning-based methods for solving eigenvalue problems in quantum
physics. In contrast to these existing methods for eigenvalue PDEs which
typically compute the first eigenvalue of the problem, our algorithm is able
to find multiple eigenpairs by solving a single training task. Lastly, we note that finding good
low-dimensional collective variables for complex molecular systems is of great
importance in the study of molecular kinetics. Various approaches are
proposed, using for instance the eigenfunctions in the operator-based
approaches~\cite{tica}, feature
engineering~\cite{peters-trout-2006,sultan-kiss-shukla-pande-2014,sultan-pande-automate-design-cv},
and
autoencoders~\cite{chen-tan-ferguson-cv-discovery-2018,chen-ferguson-on-the-fly-cv-discovery-2018,hythem-chen-ferguson-2020,chasing-cv-autoencodes-biased-traj}.
Although this topic is not the focus of the current paper, we point out that
it is potentially possible to reformulate our (training) algorithm as an algorithm for finding good collective variables of molecular systems. The application of the current work in this direction will be explored in future work.

The remainder of this article is organized as follows. In
Section~\ref{sec-theoretical-background}, we present the mathematical setting
of the eigenvalue PDE problem considered in this paper, and in particular we study its variational formulations. 
In Section~\ref{sec-algo-consideration}, we present the numerical algorithm for
solving the eigenvalue PDE problem by training artificial neural networks.
In Section~\ref{sec-example}, we demonstrate our algorithm by studying two numerical examples.
In Section~\ref{sec-discuss}, we conclude with discussions on various issues.
Appendix~\ref{sec-app-proof} contains the proofs of two results in Section~\ref{sec-theoretical-background}.

  \section{Mathematical setting}
  \label{sec-theoretical-background}
  In this section we present mathematical background of the eigenvalue problem \eqref{eigen-problem-rd}. In particular, we provide variational formulations for the leading eigenpairs of the eigenvalue problem \eqref{eigen-problem-rd}. 
  \subsection{Preliminary}
  \label{subsec-math-setting}
Throughout this paper, we make the following assumption on the function $a$ in \eqref{generator-l}.
  \begin{assump}
    The function $a: \mathbb{R}^d\rightarrow \mathbb{R}^{d\times d}$ is $C^2$-smooth and satisfies the (uniform ellipticity) condition 
    \begin{align}
    v^T a(x) v  \ge a^- |v|^2 \,, \quad \forall~x, v \in \mathbb{R}^d\,,
    \label{elliptic-a}
  \end{align}
  for some positive constant $a^->0$.
    \label{assump-a}
  \end{assump}
We denote by $\mathbb{N}:=\{1,2,\dots\}$ the set of natural numbers.
The mean value of a measurable function $f: \mathbb{R}^d\rightarrow \mathbb{R}$ with respect to the invariant probability
measure $\mu$ defined in \eqref{mu-invariant} is denoted as $\mathbf{E}_\mu(f)$ provided that it exists, i.e.,\
  \begin{equation}
    \mathbf{E}_\mu (f) := \int_{\mathbb{R}^d} f(x)\,\mu(dx)\,.
    \label{mean-wrt-mu}
  \end{equation}
The Hilbert space $L^2(\mu)$ consists of all measurable functions that are square-integrable with respect to $\mu$, 
with the norm and the inner product defined by, respectively,
\begin{equation}
  \|f\|_\mu := \mathbf{E}_\mu(f^2)^{\frac{1}{2}}, \quad \langle f, g\rangle_\mu :=
  \mathbf{E}_\mu(fg)\,, \quad \forall~ f, g \in L^2(\mu)\,.
\end{equation}
Also recall the following expressions of the variance and the covariance of functions with respect to $\mu$ : for $f,g \in L^2(\mu)$,
\begin{equation}
  \Var(f) = \mathbf{E}_\mu \big(f^2 \big) - (\mathbf{E}_\mu (f))^2 ,\quad 
  \Cov(f,g) = \mathbf{E}_\mu \big(fg \big) - \mathbf{E}_\mu(f) \mathbf{E}_\mu (g) \,. 
  \label{def-var-cov}
\end{equation}
Clearly, we have $\Cov(f,f)=\Var(f)$ for all $f \in L^2(\mu)$. For the
operator $\mathcal{L}$ \eqref{generator-l}, using \eqref{mu-invariant} and the integration by parts formula, we can verify that 
  \begin{equation}
    \begin{aligned}
       \langle (-\mathcal{L}) f, g\rangle_\mu = \langle f, (-\mathcal{L}) g\rangle_\mu = \frac{1}{\beta} \mathbf{E}_\mu \big((a\nabla f)\cdot \nabla g\big) \,,
    \end{aligned}
    \label{lfg-glf}
  \end{equation}
  for all $C^2$ test functions $f, g$ such that the integrals in \eqref{lfg-glf} are well-defined.

In the following we collect some useful results from~\cite{DuncanLelievrePavliotis16}. 
We need the following assumption \cite[Assumptions B and C]{DuncanLelievrePavliotis16} on $V$.
  \begin{assump}
    The function $V\in C^2(\mathbb{R}^d)$ is bounded from below and satisfies:
    \begin{enumerate}
      \item
    There exists $\delta \in (0,1)$, such that 
    \begin{equation}
      \liminf_{|x|\rightarrow +\infty} \big((1-\delta) \beta|\nabla V(x)|^2 - \Delta V(x)\big)  > 0 \,;
    \end{equation}
  \item
      $\lim\limits_{|x|\rightarrow +\infty} |\nabla V(x)| = +\infty$.
    \end{enumerate}
    \label{assump-on-v}
  \end{assump}

Define the space $L^2_0(\mu):=\{f\in L^2(\mu)\,|\, \mathbf{E}_\mu (f)=0\}$.
Under Assumptions~\ref{assump-a}--\ref{assump-on-v}, using \eqref{lfg-glf} and
the inequality \eqref{poincare-ineq} in Lemma~\ref{lemma-poisson} below we can show that $\|\cdot\|_1$, given by 
  \begin{equation}
    \|f\|_1:= \langle f, (-\mathcal{L}) f\rangle_\mu^{\frac{1}{2}}
    \label{l1-seminorm}
  \end{equation}
  for a test function $f$, defines a norm in the space
  \begin{equation}
    \mathcal{H}^1 := \Big\{f\in L^2(\mu)\,\Big|\, \mathbf{E}_\mu(f)=0,\, \|f\|_1 < +\infty\Big\}
    \label{def-space-h1}
  \end{equation}
and it satisfies the Pythagorean identity. Therefore, the completion of $\mathcal{H}^1$ with respect to 
  $\|\cdot\|_1$ \eqref{l1-seminorm} is a Hilbert space, which we again denote by $\mathcal{H}^1$.
  The inner product $\langle f,g\rangle_1$ of $\mathcal{H}^1$, defined through polarization, is actually given by
  \eqref{lfg-glf} for all test functions $f, g \in \mathcal{H}^1$.
  A detailed analysis of the space $\mathcal{H}^{1}$ can be found in~\cite{olla:hal-00722537}. 

  In view of the last expression in \eqref{lfg-glf}, we define the energy 
 $\mathcal{E}:L^2_0(\mu)\rightarrow [0, +\infty]$ as  
  \begin{align}
    \mathcal{E}(f):=\left.
      \begin{cases} 
	 \frac{1}{\beta} \mathbf{E}_\mu \big((a\nabla f)\cdot \nabla f\big)\,, & f \in \mathcal{H}^1\,,\\
	+\infty  & f \in L^2_0(\mu) \setminus \mathcal{H}^1\,.
      \end{cases}
      \right.
    \label{energy}
  \end{align}
 The operator $\mathcal{L}$ can be extended to a self-adjoint operator on
  $L^2_0(\mu)$, with the domain $D(\mathcal{L})=\{\psi \in L^2_0(\mu)\,|\,
  \exists f\in L^2_0(\mu), \mathcal{L}\psi=f\}$.
  By Cauchy-Schwarz inequality, it is straightforward to verify that
  $\|\psi\|_1 < +\infty$ for all $\psi\in D(\mathcal{L})$, from which we
  conclude that $D(\mathcal{L})\subset \mathcal{H}^1$.

  Assumptions~\ref{assump-a} and \ref{assump-on-v} are sufficient to guarantee the compactness of the embedding 
  $\mathcal{H}^1 \hookrightarrow L_0^2(\mu)$, as stated in Lemma~\ref{lemma-poisson} below.
\begin{lemma}[{{\cite[Lemma 2]{DuncanLelievrePavliotis16}}}]
Suppose that Assumptions \ref{assump-a} and \ref{assump-on-v} hold. Then the
  embedding $\mathcal{H}^1 \hookrightarrow L_0^2(\mu)$ is compact. The operator $\mathcal{L}$
  satisfies the Poincar{\'e} inequality:
  \begin{equation}
    \lambda\|g\|_\mu^2 \le \langle g, (-\mathcal{L}) g\rangle_\mu, \quad \forall g \in \mathcal{H}^1\,,
    \label{poincare-ineq}
  \end{equation}
  where $\lambda$ is a positive constant. Moreover, for all $f\in L_0^2(\mu)$,
  there exists a unique $\psi\in \mathcal{H}^1$ such that
  $-\mathcal{L}\psi=f$.
  \label{lemma-poisson}
\end{lemma}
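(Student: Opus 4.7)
The lemma has three claims: compactness of the embedding $\mathcal{H}^1\hookrightarrow L^2_0(\mu)$, the Poincar\'e inequality, and bijectivity of $-\mathcal{L}$ from its domain onto $L^2_0(\mu)$. My plan is to prove them in that order, since each one feeds into the next.

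For the compact embedding, the plan is to adapt the classical Rellich--Kondrachov argument to the weighted setting, using Assumption~\ref{assump-on-v}(i) as a Lyapunov-type tail condition. Given a bounded sequence $(f_n)\subset \mathcal{H}^1$, uniform ellipticity of $a$ (Assumption~\ref{assump-a}) shows that $\int_{\mathbb{R}^d}|\nabla f_n|^2\,\mu(dx)$ is bounded. On any ball $B_R$, the density of $\mu$ is bounded above and below by positive constants, so the restrictions of $(f_n)$ lie in a bounded set of the unweighted $H^1(B_R)$, and the usual Rellich--Kondrachov theorem gives an $L^2(B_R)$-convergent subsequence. The remaining work is tightness: I would multiply the PDE identity $\int(a\nabla f)\cdot\nabla f\,d\mu = \beta\mathcal{E}(f)$ by a cutoff and integrate by parts so that the expression $(1-\delta)\beta|\nabla V|^2-\Delta V$ appears, which by Assumption~\ref{assump-on-v}(i) is bounded below by a positive constant outside some large ball. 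This produces an estimate of the form $\int_{|x|>R}f_n^2\,d\mu \le C\big(\sup_{|x|>R}(\cdots)\big)^{-1}\|f_n\|_1^2$, giving uniform smallness of the tails. A diagonal argument then combines the local convergence with the tail bound to extract a subsequence converging in $L^2_0(\mu)$. This tightness step is the main obstacle; the rest is routine.

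For the Poincar\'e inequality, once compactness is in hand I would apply the standard variational characterization. Set
\begin{equation*}
\lambda := \inf\big\{\|f\|_1^2 \,\big|\, f\in \mathcal{H}^1,\ \|f\|_\mu=1\big\}.
\end{equation*}
Take a minimizing sequence $(f_n)$; it is bounded in $\mathcal{H}^1$, so by compactness a subsequence converges in $L^2_0(\mu)$ to some $f^*$ with $\|f^*\|_\mu=1$, in particular $f^*\ne 0$. Lower semicontinuity of $\|\cdot\|_1$ with respect to $L^2(\mu)$-convergence (which follows from weak lower semicontinuity in the Hilbert space $\mathcal{H}^1$ after extracting a further weakly convergent subsequence) shows $\|f^*\|_1^2 \le \lambda$. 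Thus $\lambda>0$ is attained, and the Poincar\'e inequality \eqref{poincare-ineq} follows by scaling.

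For the solvability of $-\mathcal{L}\psi=f$, I would apply Lax--Milgram (equivalently, the Riesz representation theorem, since the form is symmetric) on the Hilbert space $\mathcal{H}^1$ equipped with the inner product $\langle\cdot,\cdot\rangle_1$ from \eqref{lfg-glf}. Continuity of the bilinear form is immediate from Cauchy--Schwarz, and coercivity is trivial since $\langle u,u\rangle_1=\|u\|_1^2$. The Poincar\'e inequality just established makes the linear functional $v\mapsto \langle f,v\rangle_\mu$ continuous on $\mathcal{H}^1$, because $|\langle f,v\rangle_\mu|\le \|f\|_\mu\|v\|_\mu \le \lambda^{-1/2}\|f\|_\mu\|v\|_1$. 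Lax--Milgram then yields a unique $\psi\in\mathcal{H}^1$ with $\langle \psi,v\rangle_1=\langle f,v\rangle_\mu$ for all $v\in\mathcal{H}^1$, which identifies $\psi$ as the unique weak solution to $-\mathcal{L}\psi=f$ and places $\psi\in D(\mathcal{L})\subset\mathcal{H}^1$. Uniqueness within $\mathcal{H}^1$ follows from the Poincar\'e inequality applied to the difference of two solutions.
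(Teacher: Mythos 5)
This lemma is stated as a citation to \cite[Lemma 2]{DuncanLelievrePavliotis16}; the paper does not give a proof of it, so there is no ``paper's own proof'' to compare against. That said, the route you sketch (compact embedding $\Rightarrow$ Poincar\'e $\Rightarrow$ Lax--Milgram) is the standard one and is essentially what the cited reference does. Your second and third steps are correctly argued: once compactness is in hand, the Poincar\'e inequality follows from lower semicontinuity of $\|\cdot\|_1$ along a minimizing sequence (with the small bookkeeping point that the weak $\mathcal H^1$ limit and the strong $L^2_0(\mu)$ limit must be identified via the continuous embedding), and Lax--Milgram/Riesz with the Poincar\'e inequality gives existence and uniqueness of the weak solution, which then lies in $D(\mathcal L)$ by the very definition of that domain.

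The one step that is genuinely incomplete is the tightness estimate in the compactness proof, and you are right to flag it as the main obstacle. Two issues: (i) the bound should involve $\inf_{|x|>R}$ of the Lyapunov quantity, not $\sup$; and (ii) Assumption~\ref{assump-on-v}(i) only gives $\liminf_{|x|\to\infty}\big((1-\delta)\beta|\nabla V|^2-\Delta V\big)>0$, i.e.\ boundedness below by a positive constant, not divergence. A bound of the form $\int_{|x|>R}f_n^2\,d\mu\le C\big(\inf_{|x|>R}(\cdots)\big)^{-1}\|f_n\|_1^2$ would therefore stay bounded away from zero as $R\to\infty$ and would not by itself give tightness. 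To close the gap you must bring in Assumption~\ref{assump-on-v}(ii), $|\nabla V|\to\infty$: the usual device is to pass (via the unitary map $f\mapsto e^{-\beta V/2}f$) to the Schr\"odinger form $-\frac{1}{\beta}\Delta+W$ with $W=\frac{\beta}{4}|\nabla V|^2-\frac12\Delta V$, and then to combine (i) and (ii) to show $W(x)\to+\infty$, which is what makes the tail integrals vanish. Spelling out exactly how (i) and (ii) together force $W\to\infty$ is the real content of the compactness step; as written your sketch does not yet do this.
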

Note that Lemma~\ref{lemma-poisson} implies that the operator $-\mathcal{L}: D(\mathcal{L})\rightarrow L^2_0(\mu)$ is bijective and therefore admits a unique inverse $(-\mathcal{L})^{-1}$.

  In the following we consider the spectrum of $-\mathcal{L}$. We show that
  $-\mathcal{L}$ has purely discrete spectrum under Assumptions~\ref{assump-a} and~\ref{assump-on-v}. Let us first recall some definitions.  Denote by $I$ the identity operator on $L^2_0(\mu)$. The spectrum of~$-\mathcal{L}$, denoted by $\sigma(-\mathcal{L})$, is the set consisting of all complex values $\lambda \in
  \mathbb{C}$, for which the operator $-\mathcal{L} - \lambda I: D(-\mathcal{L})\subset L^2_0(\mu) \rightarrow L^2_0(\mu)$ 
   does not have a bounded inverse.
  The self-adjointness of $-\mathcal{L}$ implies that $\sigma(-\mathcal{L})\subset \mathbb{R}$. 
  In this case, the discrete spectrum of $-\mathcal{L}$, denoted by
  $\sigma_{disc}(-\mathcal{L})$, is the subset of~$\sigma(-\mathcal{L})$ consisting of isolated eigenvalues $\lambda\in\sigma(-\mathcal{L})$ with
  finite multiplicity, i.e.,\ $\{\lambda'\in \mathbb{C}\,|\,
  \lambda'\in\sigma(-\mathcal{L}), |\lambda'-\lambda| < \epsilon\} = \{\lambda\}$ for some $\epsilon > 0$ and the eigenspace $\{\psi\in
  L_0^2(\mu)\,|\, -\mathcal{L}\psi = \lambda \psi\}$ has finite (non-zero)
  dimension. We say that $-\mathcal{L}$ has purely discrete spectrum if
  $\sigma(-\mathcal{L})=\sigma_{disc}(-\mathcal{L})$. See~\cite[Section 2.4]{teschl2009mathematical} and~\cite[Chapter VII and Section
  VIII.3]{reed1981functional} for careful studies on the spectrum of
  self-adjoint operators in Hilbert spaces.

Applying Lemma~\ref{lemma-poisson}, we obtain the results below which guarantee the
compactness of $(-\mathcal{L})^{-1}$ and the fact that $-\mathcal{L}$ has purely discrete spectrum. 
Its proof is presented in Appendix~\ref{sec-app-proof}.
  \begin{prop}
The following two results hold under Assumptions \ref{assump-a} and \ref{assump-on-v}. 
    \begin{enumerate}
      \item
    The operator $(-\mathcal{L})^{-1}: L^2_0(\mu)\rightarrow L^2_0(\mu)$ is compact.
      \item
	There exist an orthonormal basis $(\varphi_i)_{i\ge 1}$ in
	$D(\mathcal{L})$ and a sequence of positive numbers $(\lambda_i)_{i\ge 1}$, 
	where $0 < \lambda_1\le \lambda_2 \le \cdots$ and $\lim_{i\rightarrow +\infty} \lambda_i =+\infty$,
	such that $-\mathcal{L}\varphi_i=\lambda_i\varphi_i$ for $i \ge 1$.
	Moreover, we have 
    \begin{equation}
      \sigma(-\mathcal{L})=\sigma_{disc}(-\mathcal{L}) = \{\lambda_1,
      \lambda_2, \lambda_3, \dots\}\,.
      \end{equation}
    \end{enumerate}
    \label{prop-spectrum}
  \end{prop}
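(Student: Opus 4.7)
The plan is to deduce both parts from the spectral theorem for compact self-adjoint operators, applied to the bounded inverse $(-\mathcal{L})^{-1}$, which is well-defined on all of $L^2_0(\mu)$ by Lemma~\ref{lemma-poisson}.

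First I would prove part~1 by factoring $(-\mathcal{L})^{-1}: L^2_0(\mu)\to L^2_0(\mu)$ as the composition of two maps: the solution operator $(-\mathcal{L})^{-1}: L^2_0(\mu)\to \mathcal{H}^1$ and the embedding $\mathcal{H}^1\hookrightarrow L^2_0(\mu)$. Since the latter is compact by Lemma~\ref{lemma-poisson}, it suffices to show that the former is bounded. Given $f\in L^2_0(\mu)$, let $\psi=(-\mathcal{L})^{-1}f\in \mathcal{H}^1$. Using \eqref{lfg-glf} and Cauchy--Schwarz one has $\|\psi\|_1^2=\langle \psi,(-\mathcal{L})\psi\rangle_\mu=\langle \psi,f\rangle_\mu\le \|\psi\|_\mu\|f\|_\mu$, and the Poincar\'e inequality \eqref{poincare-ineq} gives $\|\psi\|_\mu\le \lambda^{-1/2}\|\psi\|_1$, so $\|\psi\|_1\le \lambda^{-1/2}\|f\|_\mu$. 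This yields boundedness, hence compactness of $(-\mathcal{L})^{-1}$ on $L^2_0(\mu)$.

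Next I would verify that $(-\mathcal{L})^{-1}$ is self-adjoint as a bounded operator on $L^2_0(\mu)$: symmetry follows directly from the self-adjointness identity \eqref{lfg-glf} applied to the preimages $\psi=(-\mathcal{L})^{-1}f$ and $\chi=(-\mathcal{L})^{-1}g$, and a bounded everywhere-defined symmetric operator is self-adjoint. Applying the spectral theorem for compact self-adjoint operators, I obtain an orthonormal basis $(\varphi_i)_{i\ge 1}$ of $L^2_0(\mu)$ consisting of eigenfunctions of $(-\mathcal{L})^{-1}$, with real eigenvalues $\mu_i$ accumulating only at $0$. Since $(-\mathcal{L})^{-1}$ is injective (it is a bijection onto $D(\mathcal{L})$ by Lemma~\ref{lemma-poisson}), no $\mu_i$ equals $0$; and by the Poincar\'e inequality, $-\mathcal{L}$ is strictly positive on $\mathcal{H}^1\setminus\{0\}$, so all $\mu_i>0$. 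Setting $\lambda_i=1/\mu_i$ then gives $\varphi_i\in D(\mathcal{L})$ and $-\mathcal{L}\varphi_i=\lambda_i\varphi_i$, with $0<\lambda_1\le\lambda_2\le\cdots$ and $\lambda_i\to+\infty$ after ordering.

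Finally I would establish the spectral identification $\sigma(-\mathcal{L})=\sigma_{disc}(-\mathcal{L})=\{\lambda_i\}_{i\ge 1}$. The inclusion $\supset$ is immediate from the eigenvalue equation. For the converse, I would use the standard spectral mapping between a positive self-adjoint operator and its bounded compact inverse: for any $\lambda\ne 0$, the identity
\begin{equation}
-\mathcal{L}-\lambda I = -\lambda(-\mathcal{L})\bigl((-\mathcal{L})^{-1}-\lambda^{-1}I\bigr)
\end{equation}
on $D(\mathcal{L})$ shows that $\lambda\in\sigma(-\mathcal{L})$ if and only if $\lambda^{-1}\in\sigma((-\mathcal{L})^{-1})\setminus\{0\}=\{\mu_i\}$. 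Since each nonzero eigenvalue $\mu_i$ of the compact self-adjoint operator $(-\mathcal{L})^{-1}$ is isolated and of finite multiplicity, the same holds for each $\lambda_i$, so $\sigma(-\mathcal{L})=\sigma_{disc}(-\mathcal{L})$. The main technical subtlety I expect is to carefully handle the translation between the spectrum of the unbounded operator $-\mathcal{L}$ and that of its bounded inverse, in particular to rule out spurious points in $\sigma(-\mathcal{L})$; this is handled cleanly by the factorization above, which reduces the invertibility question to the already understood spectrum of $(-\mathcal{L})^{-1}$.
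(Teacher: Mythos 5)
Your proof is correct and follows the same overall strategy as the paper: establish that $(-\mathcal{L})^{-1}$ is compact via the $\|(-\mathcal{L})^{-1}f\|_1 \lesssim \|f\|_\mu$ estimate combined with the compact embedding $\mathcal{H}^1\hookrightarrow L^2_0(\mu)$ from Lemma~\ref{lemma-poisson}, then deduce the discrete spectrum from compactness of the resolvent at $\lambda=0$. The one place where you diverge is that the paper simply invokes a black-box theorem from Reed--Simon (Theorem~XIII.64) to pass from ``compact resolvent'' to ``orthonormal basis of eigenfunctions with $\lambda_i\to+\infty$ and purely discrete spectrum,'' whereas you unpack that theorem yourself: you verify self-adjointness and positivity of $(-\mathcal{L})^{-1}$, invoke the spectral theorem for compact self-adjoint operators to get eigenpairs $(\mu_i,\varphi_i)$ with $\mu_i>0$ and $\mu_i\to 0$, set $\lambda_i=1/\mu_i$, and then carry out the spectral mapping via the factorization $-\mathcal{L}-\lambda I = -\lambda(-\mathcal{L})\bigl((-\mathcal{L})^{-1}-\lambda^{-1}I\bigr)$ to identify $\sigma(-\mathcal{L})$ with $\{1/\mu_i\}$. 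This makes the argument self-contained at the cost of a few more lines; the paper's approach is terser because it delegates precisely this work to the cited theorem. Both are valid, and your more explicit treatment of the spectral correspondence and of the injectivity/positivity of $(-\mathcal{L})^{-1}$ actually spells out the ``not difficult to argue'' step that the paper leaves implicit.
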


  \subsection{Variational characterization}
  \label{subsec-variational}
  In this section, we present a variational characterization of the first $K$ eigenpairs $\{(\lambda_i, \varphi_i)\}_{1\le i \le K}$ of \eqref{eigen-problem-rd}, where $K \in \mathbb{N}$. Note that by Proposition~\ref{prop-spectrum} we can assume without loss of generality that the eigenfunctions $(\varphi_i)_{1 \le i \le K}$ are both normalized and pairwise orthogonal.

  First, let us recall the min-max theorem for positive definite operators~\cite[Section 12.1]{lieb2001analysis}, i.e.,\ 
  \begin{equation}
  \begin{aligned}
    \lambda_k = \min_{H_k} \max_{f\in H_k,\, \|f\|_\mu=1} \mathcal{E}(f) \,, \quad k \ge 1\,,
  \end{aligned}
    \label{lambda-u-variational-ith}
  \end{equation}
where $\lambda_k$ is the $k$th eigenvalue  of \eqref{eigen-problem-rd} in \eqref{sequence-of-lambda}, 
$\mathcal{E}(\cdot)$ is the energy in \eqref{energy}, and the minimum is over all $k$-dimensional
subspaces $H_k$ of $\mathcal{H}^1$.  In particular,
\eqref{lambda-u-variational-ith} with $k=1$ implies that the first eigenpair $(\lambda_1, \varphi_1)$ solves
  \begin{align}
    \lambda_1 = \mathcal{E}(\varphi_1)\,,~ \mbox{where}~ \varphi_1\in \argmin_{f\in \mathcal{H}^1,\, \|f\|_\mu=1} \mathcal{E}(f)\,.
    \label{lambda1-u1-variational}
  \end{align}

  To present the variational characterization of the first $K$ eigenpairs, let
  us define the matrix for $k\in \mathbb{N}$
\begin{equation}
  \begin{aligned}
    &F^{(k)}(f_1, f_2, \dots, f_k) = \Big(F^{(k)}_{jj'}(f_1, f_2, \dots, f_k)\Big)_{1 \le j,j'\le k}\in \mathbb{R}^{k\times k}, \\
   \mbox{where}\quad & F^{(k)}_{jj'}(f_1, f_2, \dots, f_k) = \frac{1}{\beta} \int_{\mathbb{R}^d} (a\nabla f_j)\cdot \nabla f_{j'}\,d\mu\,,
    \end{aligned}
  \label{def-f-k}
\end{equation}
for functions $f_1, f_2, \dots, f_k\in \mathcal{H}^1$.
The main result of this section is then the following variational characterization of the first $K$ eigenpairs $\{(\lambda_i, \varphi_i)\}_{1\le i \le K}$.
\begin{thm}
Let $K\in \mathbb{N}$. Suppose that Assumptions \ref{assump-a} and \ref{assump-on-v} hold. 
  Define the diagonal matrix
  \begin{equation}
\Sigma:=\mbox{\textnormal{diag}}\{\omega_1, \omega_2, \dots, \omega_K\}\in \mathbb{R}^{K\times K}\,,
    \label{mat-sigma}
  \end{equation}
  where $(\omega_i)_{1\le i \le K}$ is a non-increasing sequence of positive numbers, i.e.,\ $\omega_1 \ge \dots \ge \omega_K >0$. Then, we have 
  \begin{align}
      \sum_{i=1}^K \omega_i\lambda_i =\min_{f_1,\dots, f_K\in \mathcal{H}^1} \sum_{i=1}^K \omega_i \mathcal{E}(f_i) 
      = \min_{f_1,\dots, f_K\in \mathcal{H}^1} \mbox{\textnormal{tr}} \Big(\Sigma F^{(K)}(f_1,f_2,\dots, f_K)\Big)\,,
    \label{variational-all-first-k}
  \end{align}
   where $F^{(K)}(f_1,f_2,\dots, f_K)$ is the $K\times K$ matrix defined in \eqref{def-f-k} (with $k=K$), and the minimum is over all $f_1,f_2,\dots, f_K\in \mathcal{H}^1$ such that 
  \begin{equation} 
   \langle f_i,f_j\rangle_\mu = \delta_{ij}\,, \quad \forall i,j\, \in \{1,\dots,K\}\,.
    \label{f-orthonormal}
  \end{equation}
 Moreover, the minimum in \eqref{variational-all-first-k} is achieved when $f_i=\varphi_i$ for $1 \le i \le K$.
  \label{thm-variational-form}
\end{thm}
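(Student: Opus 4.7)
The plan is to reduce the statement to a weighted rearrangement inequality for the spectrum of $-\mathcal{L}$, exploiting the strict monotonicity of the weights through an Abel summation. The second equality in~\eqref{variational-all-first-k} is immediate from the definitions: by~\eqref{def-f-k} and~\eqref{energy} the diagonal entries $F^{(K)}_{ii}(f_1,\dots,f_K)=\mathcal{E}(f_i)$, so $\mathrm{tr}(\Sigma F^{(K)})=\sum_{i=1}^K\omega_i\mathcal{E}(f_i)$. The attainment at $f_i=\varphi_i$ is also direct: the eigenbasis satisfies~\eqref{f-orthonormal}, and $\mathcal{E}(\varphi_i)=\langle\varphi_i,-\mathcal{L}\varphi_i\rangle_\mu=\lambda_i$. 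The substance therefore lies in the lower bound $\sum_{i=1}^K \omega_i \mathcal{E}(f_i)\ge \sum_{i=1}^K \omega_i\lambda_i$.

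For the lower bound, I would expand each admissible $f_i$ in the orthonormal eigenbasis $(\varphi_k)_{k\ge 1}$ furnished by Proposition~\ref{prop-spectrum}, which is complete in $L_0^2(\mu)$ since $(-\mathcal{L})^{-1}$ is compact self-adjoint there. Setting $c_{ik}:=\langle f_i,\varphi_k\rangle_\mu$ gives $f_i=\sum_{k\ge 1}c_{ik}\varphi_k$ in $L^2(\mu)$, and the constraint~\eqref{f-orthonormal} becomes $CC^T=I_K$ for the coefficient matrix $C=(c_{ik})$. Consequently $C^TC$ is idempotent and self-adjoint, i.e.\ an orthogonal projection of rank $K$ on $\ell^2$, whose diagonal entries satisfy $\sum_{i=1}^K c_{ik}^2\le 1$ for every $k$. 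The key technical step is the Parseval-type identity
\[
\mathcal{E}(f_i)=\sum_{k\ge 1}\lambda_k\, c_{ik}^2,
\]
which holds trivially on $D(\mathcal{L})$ from $\mathcal{E}(f)=\langle f,-\mathcal{L}f\rangle_\mu$ and the spectral expansion, and then extends to $\mathcal{H}^1$ by identifying $\mathcal{H}^1$ with the form domain $D(\sqrt{-\mathcal{L}})$ via the density of $D(\mathcal{L})$ in $\mathcal{H}^1$ with respect to $\|\cdot\|_1$.

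With this identity in hand, define $S_n:=\sum_{i=1}^n\mathcal{E}(f_i)=\sum_{k\ge 1}\lambda_k p_k^{(n)}$ for $n\in\{1,\dots,K\}$, where $p_k^{(n)}:=\sum_{i=1}^n c_{ik}^2\in[0,1]$ and $\sum_{k\ge 1}p_k^{(n)}=\sum_{i=1}^n\|f_i\|_\mu^2=n$. Because $\lambda_1\le\lambda_2\le\cdots$ is non-decreasing, concentrating mass on the smallest indices minimizes $\sum_k\lambda_k p_k^{(n)}$; this one-line rearrangement yields $S_n\ge T_n:=\sum_{i=1}^n\lambda_i$. Abel summation with $\omega_{K+1}:=0$ then rewrites both sides of the target inequality as
\[
\sum_{i=1}^K \omega_i\mathcal{E}(f_i)=\sum_{n=1}^K(\omega_n-\omega_{n+1})\,S_n,\qquad \sum_{i=1}^K\omega_i\lambda_i=\sum_{n=1}^K(\omega_n-\omega_{n+1})\,T_n,
\]
and the strict monotonicity $\omega_1>\omega_2>\cdots>\omega_K>0=\omega_{K+1}$ makes every prefactor $\omega_n-\omega_{n+1}>0$, so $S_n\ge T_n$ assembles into the claimed lower bound.

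The main obstacle I expect is precisely the identification of $\mathcal{H}^1$ with the form domain of $-\mathcal{L}$ needed to validate the Parseval identity above for arbitrary $f\in\mathcal{H}^1$ (rather than $f\in D(\mathcal{L})$, where it is tautological). This is a standard functional-analytic fact for non-negative self-adjoint operators, but it is the one step that uses machinery slightly beyond what is explicitly stated in the excerpt; everything else amounts to spectral bookkeeping and the Abel summation trick made available by the strict ordering of the $\omega_i$'s.
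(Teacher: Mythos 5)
Your proof is correct, and it takes a genuinely different route from the paper's. The paper works at the level of the finite $K\times K$ Gram matrix $F^{(K)}(f_1,\dots,f_K)$: it invokes Ruhe's trace inequality to bound $\mathrm{tr}(\Sigma F^{(K)})$ from below by $\sum_i\omega_i\widetilde\lambda_i$, where $\widetilde\lambda_i$ are the eigenvalues of $F^{(K)}$, and then separately shows $\widetilde\lambda_i\ge\lambda_i$ via the min--max theorem \eqref{lambda-u-variational-ith}. You instead expand each $f_i$ in the full eigenbasis of $-\mathcal{L}$, pass to the coefficient matrix $C$ (whose rows are orthonormal, so $C^TC$ is a rank-$K$ projection with diagonal entries in $[0,1]$), and obtain the Ky Fan--type inequality $\sum_{i=1}^n\mathcal{E}(f_i)\ge\sum_{i=1}^n\lambda_i$ by a one-line rearrangement, which you then upgrade to the weighted statement by Abel summation using $\omega_n-\omega_{n+1}\ge 0$. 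In effect you re-derive the content of Ruhe's inequality from scratch in the infinite-dimensional setting rather than quoting it, so your argument is more self-contained on the linear-algebra side; the price you pay, and correctly flag, is the need to identify $\mathcal{H}^1$ with the form domain of $-\mathcal{L}$ so that the Parseval identity $\mathcal{E}(f)=\sum_k\lambda_k\langle f,\varphi_k\rangle_\mu^2$ holds for all $f\in\mathcal{H}^1$ (this follows since the truncated sums $f_N=\sum_{k\le N}\langle f,\varphi_k\rangle_\mu\varphi_k$ satisfy $\langle f-f_N,f_N\rangle_1=0$, giving $\|f_N\|_1\le\|f\|_1$, and conversely boundedness of $\sum_k\lambda_k c_k^2$ forces $\|\cdot\|_1$-convergence of $f_N$ to $f$). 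The paper's route sidesteps this form-domain step because it only evaluates $\mathcal{E}$ on finite linear combinations of the given $f_i\in\mathcal{H}^1$ and then compares $K\times K$ spectra via \eqref{lambda-u-variational-ith}, which is already available. Both proofs are sound; yours trades a black-box matrix inequality for an elementary rearrangement plus one extra functional-analytic identification.
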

Theorem~\ref{thm-variational-form} was obtained in~\cite{sz-entropy-2017}
using calculus of variations. In Appendix~\ref{sec-app-proof}, we present an
alternative proof of Theorem~\ref{thm-variational-form} by applying Ruhe's trace inequality~\cite{Ruhe1970,marshall-inequalities-book}. 

Note that \eqref{variational-all-first-k} reduces to the min-max theorem~\eqref{lambda1-u1-variational} when $K=1$.
In the general case, the characterization \eqref{variational-all-first-k}-\eqref{f-orthonormal}
allows us to develop numerical methods for computing multiple eigenpairs of
$-\mathcal{L}$ by solving a single optimization problem (see Section~\ref{sec-algo-consideration}).

We conclude this section with a remark on other types of variational formulations.
    \begin{remark}
       Denote by $\lambda_{\max}\big(F^{(k)}(f_1,f_2, \dots, f_k)\big)$ the maximum
      eigenvalue of the matrix $F^{(k)}(f_1,f_2, \dots, f_k) \in
      \mathbb{R}^{k\times k}$ in \eqref{def-f-k}, where $k \in \mathbb{N}$. 
By considering an orthonormal basis and applying the min-max principle for
      symmetric matrices to $\lambda_{\max}\big(F^{(k)}(f_1,f_2, \dots,
      f_k)\big)$, we can obtain the following equivalent formulation of \eqref{lambda-u-variational-ith}: 
  \begin{equation}
    \begin{aligned}
      \lambda_k &= \min_{f_1,f_2,\dots, f_k\in \mathcal{H}^1}\max_{c\,\in \mathbb{R}^k, |c|=1}
    \mathcal{E}\Big(\sum_{i=1}^k c_i f_i\Big) \\
      & = \min_{f_1,f_2,\dots, f_k\in \mathcal{H}^1}
    \lambda_{\max}\big(F^{(k)}(f_1,f_2, \dots, f_k)\big) \,,
    \end{aligned}
    \label{lambda-u-variational-kth-basis-function}
  \end{equation}
	where the minimum is over all $(f_i)_{1\le i \le k}\subset
	\mathcal{H}^1$ such that $\langle f_{i}, f_{j}\rangle_\mu =
	\delta_{ij}$ for all~$i, j \in \{1,\dots, k\}$. 
	Besides, the eigenvalues satisfy the max-min principle~\cite[Section 12.1]{lieb2001analysis}: 
  \begin{equation}
    \lambda_k = \max_{f_1, f_2, \dots, f_{k-1}\in \mathcal{H}^1}\min_{f \in
    H_{k-1}^{\perp}, \|f\|_\mu=1} \mathcal{E}(f) \,, \quad \forall~ k \ge 1\,,
    \label{lambda-u-variational-max-min}
  \end{equation}
where $H_{0}^{\perp}=\mathcal{H}^1$ and, for fixed $(f_i)_{1 \le i \le {k-1}} \in \mathcal{H}^1$ with $k>1$, 
	$H_{k-1}^{\perp}:=\{f\in \mathcal{H}^1\,|\, \langle f, f_i\rangle_\mu=0, \forall~ 1 \le i \le {k-1}\}$. 
	Note that, in contrast to \eqref{lambda-u-variational-kth-basis-function}, 
in \eqref{lambda-u-variational-max-min} the functions $(f_j)_{1\le j\le i-1}$ do not have to be linearly independent. 
      Also, both \eqref{lambda-u-variational-kth-basis-function} and \eqref{lambda-u-variational-max-min} recover \eqref{lambda1-u1-variational} when $k=1$.

	While in this paper we propose numerical algorithms based on the
	variational formulation \eqref{variational-all-first-k}--\eqref{f-orthonormal}, let us point out that it is also
	possible to develop numerical algorithms for computing the eigenpairs of $-\mathcal{L}$
	based on \eqref{lambda-u-variational-kth-basis-function} or \eqref{lambda-u-variational-max-min}.
  \end{remark}

  \subsection{Neural network spaces} 
  \label{subsec-neural-network-space}
  In this section we introduce the neural network spaces. For brevity
  we only consider feedforward neural networks
  following~\cite{PETERSEN2018296}. However, we point out that both the
  discussion and the numerical algorithm in this paper, i.e., Algorithm~\ref{algo-1} in Section~\ref{sec-algo-consideration}, can be directly adapted to more general types of neural networks. 
  
  Given $L, N_0, N_1, \dots, N_L \in \mathbb{N}$, 
  the space $\mathcal{S}$ of $L$-layer neural networks with the architecture
\begin{equation}  
  \mathcal{N}:= (N_0, N_1, \dots, N_L) 
  \label{def-arch-nn}
\end{equation}  
  is defined as the set of sequences of matrix-vector tuples
  \begin{equation}
    \begin{aligned}
      \mathcal{S} := \Big\{\big((A_\ell, b_\ell)\big)_{1\le \ell \le L} \,\Big|\,
      A_\ell\in \mathbb{R}^{N_\ell \times N_{\ell-1}},\, b_\ell \in \mathbb{R}^{N_l},\, 
        \ell\in \{1,2,\dots,L\}\Big\}\,.
    \end{aligned}
    \label{nn-space}
  \end{equation}
For each $\Phi\in \mathcal{S}$, there are $N_\ell$ neurons in the $\ell{\mathrm{th}}$ layer of $\Phi$, where $\ell\in \{0,1,\dots,L\}$. 
  These layers are called the input layer for $\ell=0$, the output layer for
  $\ell=L$, and the hidden layer for $1 \le\ell <L$, respectively. 
See Figure~\ref{fig-nn} for the illustration of neural networks with $L=4$ layers.
  Note that the space $\mathcal{S}$ can be viewed as the Euclidean space
  $\mathbb{R}^N$, where the dimension $N = \sum_{\ell=1}^L N_\ell(N_{\ell-1} +
  1)$ is equal to the total number of parameters in a neural network $\Phi\in \mathcal{S}$.

     Next, we discuss the use of neural networks in representing functions.
     Let $\rho: \mathbb{R}\rightarrow \mathbb{R}$ be a $C^1$-smooth activation function.
 Given a neural network $\Phi =((A_\ell, b_\ell))_{1 \le \ell \le L} \in
 \mathcal{S}$ with the architecture $\mathcal{N}$ \eqref{def-arch-nn}, the
 realization of $\Phi$ is defined as the function $\RmapNN(\Phi):=f\in C(\mathbb{R}^{N_0}, \mathbb{R}^{N_L})$, whose value $f(x) = y\in \mathbb{R}^{N_L}$ at any $x\in \mathbb{R}^{N_0}$ is determined through the following scheme: 
      \begin{equation}
	\begin{aligned}
	  h^{(0)} &:= x, \\
	  h^{(\ell)} &:= \rho(A_\ell h^{(\ell-1)} + b_\ell), \quad \forall~\ell \in \{1,2, \dots, L-1\}\,, \\
	  y &:= A_{L} h^{(L-1)} + b_{L} \,.
	\end{aligned}
	\label{x-to-y}
      \end{equation}
The map 
\begin{equation}
\RmapNN: \mathcal{S} \rightarrow C(\mathbb{R}^{N_0}, \mathbb{R}^{N_L})
  \label{realization-map}
\end{equation}
is called the realization map. Note that, with slight abuse of notation, the
action of $\rho$ on vectors in \eqref{x-to-y} is defined componentwise, i.e.,\ for $\ell\in \{1,2,\dots,
      L-1\}$,
      \begin{equation*}
	\rho(h) := \big(\rho(h_1), \rho(h_2),
      \ldots, \rho(h_{N_l})\big)^T\in \mathbb{R}^{N_\ell}\,, \quad \forall~h = (h_1, h_2,\ldots,
	h_{N_\ell})^T \in \mathbb{R}^{N_\ell}\,.
      \end{equation*}
Also, for the sake of notational simplicity, we have omitted the dependence of $\mathcal{R}$ on the activation
function $\rho$, since the latter is assumed fixed once it is chosen.

  \begin{figure}[h!]
  \centering
  \includegraphics[width=0.70\textwidth]{./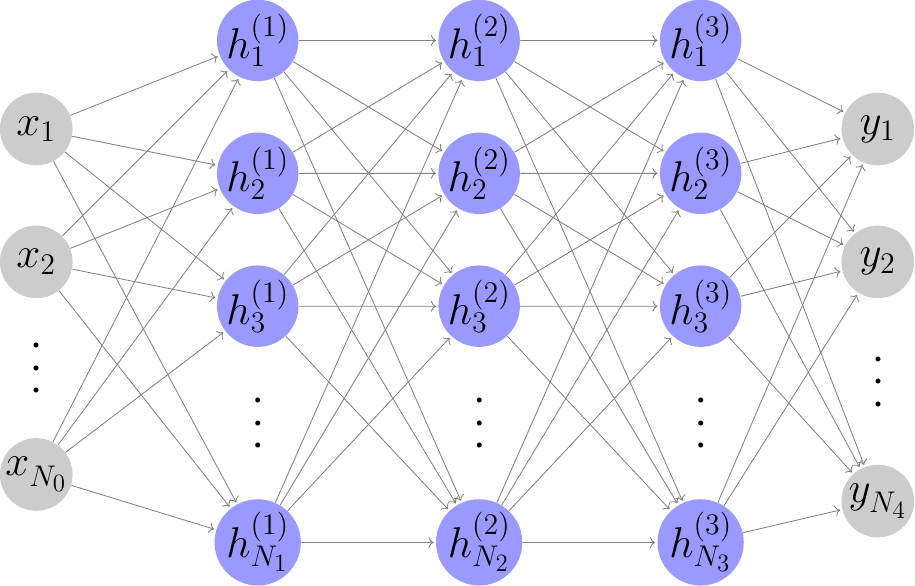}
    \caption{Illustration of neural networks with $L=4$ layers (i.e.,\ three hidden layers).
  \label{fig-nn}}
\end{figure}

  \section{Numerical algorithm}
  \label{sec-algo-consideration}
Based on the variational formulation \eqref{variational-all-first-k}--\eqref{f-orthonormal}, in this section we
propose a numerical algorithm for solving the eigenpairs $((\lambda_i,
\varphi_i))_{1 \le i \le K}$ of the PDE problem \eqref{eigen-problem-rd} by training
artificial neural networks introduced in
Section~\ref{subsec-neural-network-space}. Before presenting the algorithm,
let us first discuss the numerical treatment of both the constraints and the (high-dimensional) integrals in \eqref{variational-all-first-k}--\eqref{f-orthonormal}.

First, we consider the constraints involved in \eqref{variational-all-first-k}--\eqref{f-orthonormal}. Note that the minimization
\eqref{variational-all-first-k} is over functions with zero means (see the
definition of~$\mathcal{H}^1$ in \eqref{def-space-h1}) 
such that the pairwise orthonormality condition~\eqref{f-orthonormal} holds.
In practice, given $f\in L^2(\mu)$, the zero mean condition can be easily
imposed by applying a shift operation to $f$, that is, by considering the function $f-\mathbf{E}_\mu f$. 
For functions $f_1, f_2, \cdots, f_K \in L^2(\mu)$, we denote 
\begin{equation}
f^{\mathrm{center}}_i:=f_i-\mathbf{E}_\mu f_i\,,\quad 1 \le i \le K\,.
  \label{shift-f-to-center}
\end{equation}
Then, concerning the pairwise orthonormality condition~\eqref{f-orthonormal}, using the definition~\eqref{def-var-cov} we find that the following two conditions are equivalent:
\begin{equation}
\Cov(f_i, f_j)=\delta_{ij},\, \forall~1 \le i,j \le K \quad
  \Longleftrightarrow \quad (f^{\mathrm{center}}_i)_{1\le i\le K}~ \mathrm{satisfy\, \eqref{f-orthonormal}}\,. 
  \label{constraints-by-cov}
\end{equation}
In other words, once $(f_i)_{1 \le i \le K}$ satisfy the condition on the left hand side of 
\eqref{constraints-by-cov}, the functions $(f_i^{(\mathrm{center}})_{1 \le i \le K}$ satisfy both the zero mean condition and the pairwise orthonormality condition~\eqref{f-orthonormal}. As we will see below, this
fact allows us to work with the unshifted functions $(f_i)_{1 \le i \le K}$
when we train neural networks in the algorithm. The
eigenfunctions with zero means can be easily recovered according to \eqref{shift-f-to-center}.

Next, we consider the approximation of the integrals in \eqref{variational-all-first-k}--\eqref{f-orthonormal}, or the mathematical expectation \eqref{mean-wrt-mu} in a general form. We discuss estimators based on a reweighting
technique that is useful in alleviating sampling difficulties by allowing the use of biased sample data
(see~\cite{chasing-cv-autoencodes-biased-traj,committor-by-nn} for similar
techniques).  Let $\bar{\mu}$ be a probability measure in $\mathbb{R}^d$ such that $\mu$ \eqref{mu-invariant} is absolutely continuous with respect to $\bar{\mu}$. By a change of measures, we have 
  \begin{equation}
    \mathbf{E}_\mu(f) = \int_{\mathbb{R}^d} f(x)\,\mu(dx) = \int_{\mathbb{R}^d}
    f(x)\,\frac{d\mu}{d\bar{\mu}}(x)\, \bar{\mu}(dx) =
    \mathbf{E}_{\bar{\mu}} \Big(f\,\frac{d\mu}{d\bar{\mu}}\Big)\,,
    \label{is-formula}
  \end{equation}
for a measurable test function $f: \mathbb{R}^d\rightarrow \mathbb{R}$, where
$\mathbf{E}_{\bar{\mu}}$ denotes the expectation with respect to $\bar{\mu}$.
  Assume that $n$ states $(x^{(\ell)})_{1\le \ell \le n} \subset \mathbb{R}^d$
  are sampled according to $\bar{\mu}$, where $n \in \mathbb{N}$.  Then, based on \eqref{is-formula}, we can approximate the mean $\mathbf{E}_\mu(f)$ by
  the importance sampling estimator 
  \begin{equation}
    \mathbf{E}^{\textrm{data}}\,(f) := \frac{\sum_{\ell=1}^n f(x^{(\ell)})\,\upsilon_\ell}{\sum_{\ell=1}^n \upsilon_\ell}
\,, \quad \mbox{\textnormal{where}}~ \upsilon_\ell=
    \frac{d\mu}{d\bar{\mu}}(x^{(\ell)})\,, \quad \forall~ \ell \in \{1,2,\dots, n\}\,.
    \label{mean-wts-mu-empirical}
  \end{equation}
Typically, we choose $\bar{\mu}$ such that comparing to $\mu$ it
  is easier to sample states according to $\bar{\mu}$ (e.g.,\ less
  metastability in $\bar{\mu}$). At the same time, $\bar{\mu}$
  should not be too different from $\mu$, so that the weights
  $(\upsilon_\ell)_{1\le \ell \le n}$ in \eqref{mean-wts-mu-empirical} 
  are neither too small nor too large. One can simply use $\bar{\mu}=\mu$
  (i.e.,\ $\upsilon_\ell=1.0$) when sampling directly from $\mu$ is not a difficult task.
For the computation of the weights in practice, we refer to the discussions in
the numerical example in Section~\ref{subsec-numerical-model-problem}, in the setting where the states are sampled from $\bar{\mu}=\mu$, and to the detailed discussions in the numerical example in Section~\ref{subsec-numerical-ad} (see \eqref{weights-v-ad}), in the setting where the states are sampled from a biased simulation.

In practice, a subset of the sample data $(x^{(\ell)})_{1\le \ell \le n}$ (i.e.,\ mini-batch) is often used in
training neural networks. Corresponding to this setting, let us consider a
sequence of indices~\footnote{Precisely, $\mathcal{I}$ is a multiset, since
the repetition of indices is allowed and their ordering is unimportant.}
\begin{equation}
  \mathcal{I}=(\ell_i)_{1\le i \le B}\,, \quad \mbox{where}~ \ell_i\in \{1,2,\dots, n\}\,,  
  \label{index-subset}
\end{equation} 
for some $B \in \mathbb{N}$ and $1 \le B \le n$. Given the sample data
$(x^{(\ell)})_{1\le \ell \le n}$, the sequence $\mathcal{I}$ \eqref{index-subset} defines a mini-batch of batch-size $B$:
\begin{equation}
  \mathcal{B} := (x^{(\ell_1)}, x^{(\ell_2)}, \dots, x^{(\ell_B)})\,,
  \label{eq-mini-batch}
\end{equation}
with which we can approximate the mean $\mathbf{E}_\mu(f)$ by
\begin{equation}
  \EE(f) :=
  \frac{\sum_{i=1}^B f(x^{(\ell_i)})\,\upsilon_{\ell_i}}{\sum_{i=1}^B \upsilon_{\ell_i}}\,.
    \label{mean-wts-mu-empirical-batch}
\end{equation}

In the following we apply \eqref{mean-wts-mu-empirical-batch} to approximating the integrals that will be involved in our algorithm. 
Recall that the Rayleigh quotient is defined as 
\begin{equation}
  \RQ(f) := \frac{\mathcal{E}(f)}{\Var(f)}\,, \quad \mbox{for}~ f \in L^2(\mu)\,,
  \label{def-rayleigh-quotient}
\end{equation}
where $\mathcal{E}(\cdot)$ is the energy \eqref{energy} and $\Var(\cdot)$ is
the variance in \eqref{def-var-cov}. Given the mini-batch
\eqref{eq-mini-batch}, we can approximate the quantities in both \eqref{def-var-cov} and \eqref{def-rayleigh-quotient} by
\begin{equation}
  \begin{aligned}
    \ECov(f,g) :=& \EE\big(fg \big) - \EE(f)\EE(g) , \\
    \EVar(f) :=& \EE\big(f^2\big) - \big(\EE(f)\big)^2 , \\
    \ERQ(f) :=& \frac{\frac{1}{\beta}\EE((a\nabla f)\cdot \nabla f)}{\EVar(f)}\,, 
  \end{aligned}
  \label{E-cov-var-rq}
\end{equation}
respectively, for functions $f,g: \mathbb{R}^d\rightarrow \mathbb{R}$. 

With the above preparations, we are ready to present the learning task for computing the first $K$ eigenpairs of $-\mathcal{L}$.
\paragraph{Learning task for the first $K$ eigenpairs $((\lambda_i, \varphi_i))_{1 \le i \le K}$.} \mbox{}

Let $\mathcal{S}$ be the neural network space \eqref{nn-space} with the neural
network architecture $\mathcal{N} = (N_0, N_1, \dots, N_L)$ in
\eqref{def-arch-nn}, where $L, N_0, N_1, \dots, N_L\in \mathbb{N}$. We assume that $N_0=d$ and $N_L=1$, since 
we want to approximate eigenfunctions which are from $\mathbb{R}^d$ to $\mathbb{R}$.
Given a $C^1$-smooth activation function $\rho:\mathbb{R}\rightarrow
\mathbb{R}$, recall that $\RmapNN:\mathcal{S}\rightarrow C(\mathbb{R}^d,
\mathbb{R})$ is the realization map defined in \eqref{x-to-y}--\eqref{realization-map}.
Let $\alpha>0$ be a positive (penalty) constant and $(\omega_i)_{1\le i \le
K}$ be a decreasing sequence of positive numbers, i.e.,\ $\omega_1 > \dots > \omega_K >0$ (see the second item of Remark~\ref{rmk-algo-choice-of-k-alpha}).

We approximate the $K$ eigenfunctions $(\varphi_i)_{1\le i \le K}$ by the
realizations $(\RmapNN(\Phi_i))_{1\le i \le K}$ of $K$ neural networks
$(\Phi_i)_{1\le i \le K}\subset \mathcal{S}$, which are trained using the loss function 
\begin{align}
  \begin{split}
    \mathrm{Loss}\big(\Phi_1, \dots, \Phi_K\,;&\, \mathcal{I}\big) :=
    \sum_{i=1}^K\omega_i \ERQ(\RmapNN(\Phi_i)) \\
    & + \alpha 
     \sum_{1\le i \le j \le K} \Big(\ECov(\RmapNN(\Phi_{i}), \RmapNN(\Phi_{j})) - \delta_{ij} \Big)^2 \,,
  \end{split}
      \label{lambda-kth-variational-alpha}
\end{align}
where $\mathcal{I}$ is a sequence of indices generated randomly (see
\eqref{index-subset}), $\ERQ(\cdot)$ and $\ECov(\cdot,\cdot)$ are the quantities defined in \eqref{E-cov-var-rq} using the min-batch \eqref{eq-mini-batch}.  In other words, we define the loss function \eqref{lambda-kth-variational-alpha}
based on the variational formulation \eqref{variational-all-first-k}--\eqref{f-orthonormal} in
  Theorem~\ref{thm-variational-form}, where the constraints in \eqref{f-orthonormal} are imposed by adding quadratic penalty terms in
\eqref{lambda-kth-variational-alpha} (see \eqref{constraints-by-cov}).
In particular, when $K=1$, we obtain the learning task for the first eigenpair $(\lambda_1, \varphi_1)$ with the loss 
\begin{align}
  \begin{split}
    \mathrm{Loss}\big(\Phi\,;\,\mathcal{I}\big) :=& \ERQ(\RmapNN(\Phi)) + \alpha
    \Big( \EVar\big(\RmapNN(\Phi)\big) - 1\Big)^2\,.
  \end{split}
  \label{lambda1-u1-variational-alpha}
\end{align}

Denote by $(\Phi^{(j)}_i)_{1\le i\le K}$ and $\mathcal{I}^{(j)}$ the neural networks 
and the sequence of indices \eqref{index-subset} in $j$th training step, respectively, where $j\ge 0$. The first $K$ eigenpairs can be estimated by
\begin{equation}
  \begin{aligned}
    \lambda_i^{(j)} :=& \ERQJ(\RmapNN(\Phi^{(j)}_i)) \,,\\
    \varphi_i^{(j)} :=& \RmapNN(\Phi^{(j)}_i) - \EEJ(\RmapNN(\Phi^{(j)}_i))\,,
  \end{aligned}
  \label{alg1-eigenpairs}
\end{equation}
for $i \in \{1,2,\dots, K\}$.
The complete algorithm for training the neural networks is summarized in Algorithm~\ref{algo-1}.

\begin{algorithm}[t]
  \caption{Compute the first $K$ eigenpairs $((\lambda_i, \varphi_i))_{1 \le i \le K}$.}
  \label{algo-1}
  \begin{algorithmic}[1]
    \State \textbf{Data}: $(x^{(\ell)})_{1\le \ell \le n}$ and their weights $(\upsilon_\ell)_{1\le \ell \le n}$ (see \eqref{mean-wts-mu-empirical}). 
    \State
    \textbf{Parameters}: decreasing sequence $(\omega_i)_{1\le i \le K}$, neural network space $\mathcal{S}$, total number of training steps $J$, penalty parameter $\alpha$, learning rate $r$, batch-size $B$.
    \State
    \textbf{Initialization}: $\Phi^{(0)}_1, \Phi^{(0)}_2, \dots, \Phi^{(0)}_K \in \mathcal{S}$. Set $j=0$.
    \While{$j<J$}
    \State
    Randomly generate the sequence $\mathcal{I}^{(j)}$ of length $B$ as in
    \eqref{index-subset}, and let $\mathcal{B}^{(j)}$ be the corresponding mini-batch in \eqref{eq-mini-batch}.
    \State
    Estimate the eigenpairs $((\lambda_i^{(j)}, \varphi_i^{(j)}))_{1\le i\le K}$ by \eqref{alg1-eigenpairs}. \label{step-algo-1-stat}
    \State (optional) Sort $(\Phi^{(j)}_i)_{1\le i \le K}$ such that the
    eigenvalues $\lambda_1^{(j)}, \dots \lambda_K^{(j)}$ are non-decreasing. 
    \State
Evaluate the loss \eqref{lambda-kth-variational-alpha} using the mini-batch $\mathcal{B}^{(j)}$ by auto-differentiation.
    \label{step-algo-1-forward}
    \State
    Compute the gradient of the loss with respect to neural network parameters by auto-differentiation.
    \label{step-algo-1-gradient}
    \State 
    Update $\Phi^{(j)}_1, \Phi^{(j)}_2, \dots, \Phi^{(j)}_K$ according to the
    gradient in Step~\ref{step-algo-1-gradient} to get $\Phi^{(j+1)}_1, \Phi^{(j+1)}_2, \dots, \Phi^{(j+1)}_K$.
  \EndWhile
    \State \textbf{Output}: statistics based on the estimations recorded in Step~\ref{step-algo-1-stat}.
  \end{algorithmic}
\end{algorithm}
We conclude this section with the following remarks. 
\begin{remark}
  Two comments on the above training task are in order.
  \begin{enumerate}
    \item
Note that, instead of the Rayleigh quotient \eqref{def-rayleigh-quotient}, one can also use the energy $\mathcal{E}$ \eqref{energy} in the loss function \eqref{lambda-kth-variational-alpha}. 
    \item
      Because the Rayleigh quotient $\RQ(f)$ \eqref{def-rayleigh-quotient}
      (respectively, the energy $\mathcal{E}(f)$ \eqref{energy}) involves the spatial
      derivatives of the function $f$, the loss function \eqref{lambda-kth-variational-alpha} involves spatial derivatives of the realizations 
$(\RmapNN(\Phi_i))_{1\le i \le K}$ of neural networks.
For this reason, we choose the activation function $\rho$ to be $C^1$-smooth.
      Also, in Step \ref{step-algo-1-forward} of Algorithm~\ref{algo-1}, we
      need to use auto-differentiation to compute the spatial derivatives 
of $(\RmapNN(\Phi_i))_{1\le i \le K}$ in order to evaluate the loss function. 
  \end{enumerate}
  \label{rmk-algo-1}
\end{remark}
\begin{remark}
We discuss the choices of the parameters $K$, $(\omega_i)_{1\le i \le K}$ and $\alpha$. 
  \begin{enumerate}
    \item 
      Concerning the choice of $K$, Algorithm~\ref{algo-1} works in principle for any $K\in \mathbb{N}$.
      In practice, however, one should choose $K$ depending on the concrete
      problems under consideration and also taking the computational cost into account (the computational cost is larger for larger $K$). For many
      metastable molecular systems, the eigenvalue problem has $k$ small
      eigenvalues $0< \lambda_1 \le \lambda_2 \le \dots \le \lambda_k$ for
      some $k\in \mathbb{N}$ and there is a spectral gap between $\lambda_k$
      and $\lambda_{k+1}$. For these applications, it is appropriate to apply Algorithm~\ref{algo-1} with some $K\in \{1,\dots,k\}$. 
    \item
      Although Theorem~\ref{thm-variational-form} holds for non-increasing sequences $\omega_1 \ge \omega_2 \ge \cdots \ge \omega_K>0$, 
      in practice, choosing $(\omega_i)_{1\le i \le K}$ to be pairwise distinct, i.e.,\ $\omega_1 > \omega_2 > \cdots > \omega_K>0$, 
      can help avoid the non-uniqueness of the minimizer due to reordering of eigenfunctions. 
For problems where the true eigenvalues $\lambda_1, \dots, \lambda_K$ are of the same order,  
      Algorithm~\ref{algo-1} works well for different sequences $(\omega_i)_{1\le i \le K}$ as long as the ratio
      $\frac{\omega_K}{\omega_1}$ is not too small (so that each eigenvalue has similar contribution to the total loss in (38)).
Even when the true eigenvalues have different orders of magnitude, Algorithm~\ref{algo-1} works by choosing the parameters $(\omega_i)_{1\le i \le K}$ properly (see the alanine dipeptide example in Section~\ref{subsec-numerical-ad}).
    \item
      A large $\alpha$ is required in Algorithm~\ref{algo-1} in order to guarantee that the constraints are imposed effectively.
      However, a too large $\alpha$ would introduce stiffness which in turn
      restricts the size of the learning rate in training. As an example, when
      the coefficients $(\omega_i)_{1\le i \le K}$ are chosen such that
      $\sum_{i=1}^K \omega_i\lambda_i$ are below $5$ (as $(\lambda_i)_{1\le
      i \le K}$ are unknown, this may require some empirical estimates in practice), then $\alpha \in [20, 50]$ would be an appropriate choice.
  \end{enumerate}
  \label{rmk-algo-choice-of-k-alpha}
\end{remark}

\section{Numerical examples}
\label{sec-example}
In this section, we study two concrete examples in order to demonstrate Algorithm~\ref{algo-1}. 
The code used to produce the numerical results in this section is available at \url{https://github.com/zwpku/EigenPDE-NN}.

\subsection{A high-dimensional model problem}
\label{subsec-numerical-model-problem}

In the first example, we consider an eigenvalues problem whose leading eigenvalues can be
computed using traditional numerical methods. This example allows us to compare the
solutions given by Algorithm~\ref{algo-1} to the solutions computed by traditional numerical methods (i.e.,\ reference solutions). 
We consider the problem~\eqref{eigen-problem-rd} for different dimensions
$d=2,50,100$. In each case, we chose $\beta=1.0$ and we fix the matrix $a$ in \eqref{generator-l} as the identity matrix. Correspondingly, the generator \eqref{generator-l} is 
  \begin{align}
    \mathcal{L}_d f =& - \nabla V_d \cdot \nabla f + \Delta f 
    \label{generator-l-ex1}
  \end{align}
for a test function $f: \mathbb{R}^d\rightarrow \mathbb{R}$, where $V_d: \mathbb{R}^d\rightarrow \mathbb{R}$ for $d=2,50,100$ are the potential functions that we explain next. 

\paragraph{Potentials $V_d$ for $d=2,50,100$.} First, let us consider the case where $d=2$. The potential $V_2:\mathbb{R}^2\rightarrow \mathbb{R}$ is defined as 
\begin{equation}
  V_2(x_1, x_2) = V(\theta) + 2 (r-1)^2 + 5 {\mathrm{e}}^{-5 r^2}\,, \quad \forall~ (x_1,x_2)\in \mathbb{R}^2\,,
  \label{pot-V2}
\end{equation}
where $(\theta,r) \in [-\pi, \pi)\times [0, +\infty)$ are the polar
coordinates which are related to $(x_1, x_2)\in \mathbb{R}^2$ by
\begin{equation}
  x_1 = r\cos\theta, \quad  x_2 = r\sin\theta \,,
\end{equation}
and $V: [-\pi,\pi) \rightarrow \mathbb{R}$ is a double-well potential function defined as 
\begin{equation}
 \quad 
  V(\theta)=
      \begin{cases} 
	  \big[1-(\frac{3\theta}{\pi} + 1)^2\big]^2\,, & \theta \in[-\pi, -\frac{\pi}{3})\,,\\
	\frac{1}{5}(3 - 2 \cos(3\theta))\,, &  \theta \in[-\frac{\pi}{3}, \frac{\pi}{3})\,,\\
	\big[1-(\frac{3\theta}{\pi} - 1)^2\big]^2\,,& \theta \in [\frac{\pi}{3}, \pi)\,.
      \end{cases}
      \label{ex1-pot-v0}
\end{equation}
As shown in the right plot of Figure~\ref{fig-ex1-pot2d}, there are three
low-energy regions on the potential surface of $V_2$, which are labelled as $A$, $B$, and $C$. Each of the two regions
$A$ and $B$ contains a global minimum point of $V_2$ (i.e.,\ both of these two
minimum points attain the same lowest potential value), while the region $C$ contains a local minimum point of $V_2$. 

Next, for $d=50$ and $d=100$, we define $V_d: \mathbb{R}^d\rightarrow
\mathbb{R}$ as the sum of $V_2$ in the first two coordinates of the state and a Gaussian potential in the other coordinates, namely,
\begin{equation}
  V_d(x) = V_2(x_1,x_2) + 5\sum_{i=3}^{d} x_i^2\,,\quad \forall~ x=(x_1,x_2, \dots, x_d) \in \mathbb{R}^d\,.
  \label{pot-vd}
\end{equation}
Roughly speaking, the coefficient $5$ in front of the Gaussian term in
\eqref{pot-vd} is introduced such that the dynamics of the coordinates $(x_3, \dots,
x_d)$ under the potential $V_d$ reaches quasi-equilibrium in a sufficiently
short time. This in turn guarantees that the three smallest eigenvalues
$\lambda_1$, $\lambda_2$, and $\lambda_3$ of $-\mathcal{L}_d$, where $d=50,
100$, are the same as those when $d=2$, so that we can
use the solution given by the finite volume method for $d=2$ (see \eqref{ref-eigval-solutions} below) as the reference solution 
to the eigenvalue PDE \eqref{eigen-problem-rd} for dimensions $d=50,100$ as well.
Correspondingly, for both $d=50$ and $d=100$, the first three eigenfunctions
$\varphi_1$, $\varphi_2$, $\varphi_3$  are functions of the first two
coordinates $(x_1, x_2)$ only, and can be compared to the eigenfunctions
obtained using the finite volume method for $d=2$.

\begin{figure}[t!]
  \centering
  \includegraphics[width=\textwidth]{./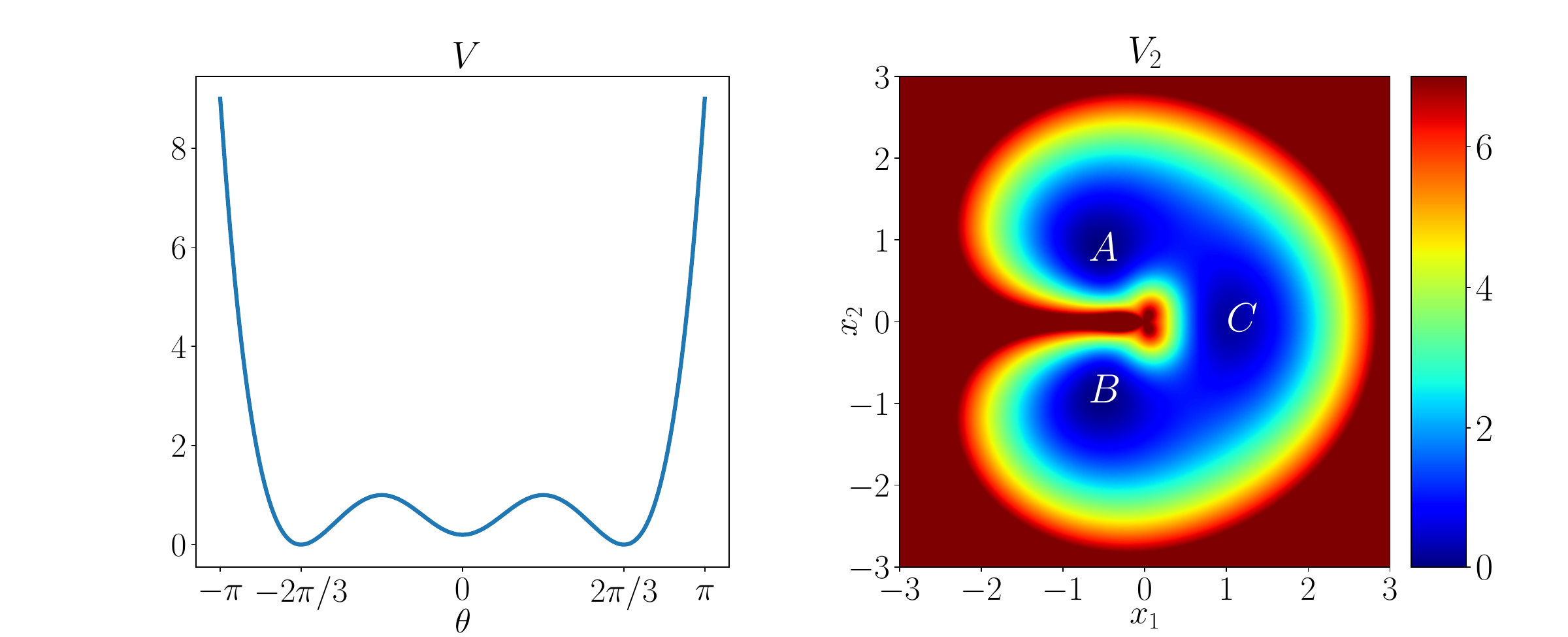}
  \caption{Profiles of the potential $V$ \eqref{ex1-pot-v0} (left) and the two-dimensional
  potential $V_2$ \eqref{pot-V2} (right) in the first example. There are three low-energy regions
  on the potential surface of $V_2$, which are labelled as $A$, $B$, and $C$.
  Regions $A$ and $B$ contain global minimum points of $V_2$, whereas the region $C$ contains a local minimum point of $V_2$.
  \label{fig-ex1-pot2d}}
\end{figure}

\begin{figure}[t!]
  \centering
  \includegraphics[width=\textwidth]{./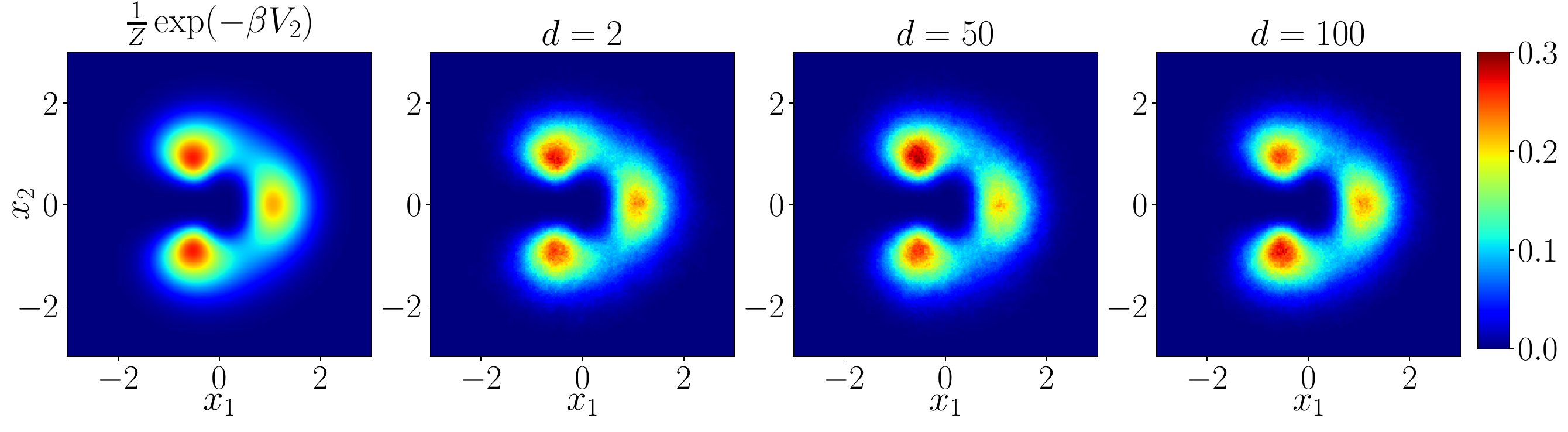}
  \caption{Empirical densities in the first example. First plot: the
  probability density $\frac{1}{Z}\mathrm{e}^{-\beta V_2}$, where $\beta=1$
  and $V_2$ is the potential function defined in \eqref{pot-V2} (see its
  profile in Figure~\ref{fig-ex1-pot2d}). Second plot: empirical probability
  density of the sample data for $d=2$. Third and fourth plots: empirical marginal probability densities 
  of the sample data in the first two coordinates $(x_1, x_2)$ for $d=50$ and $d=100$, respectively.
  In each of the last three plots, $n=5\times 10^6$ states are sampled using
  the Euler-Maruyama scheme with the timestep $\Delta t=0.001$.  
  \label{fig-ex1-hist}}
\end{figure}

\begin{table}[h]
  \centering
  \begin{tabular}{c|c|c|c|c}
    \hline
    \hline
    & FVM, $d=2$ & NN, $d=2$ & NN, $d=50$ & NN, $d=100$ \\
    \hline
    $\lambda_1$ & $0.219$ & $0.226$ $(0.003)$ & $0.210$ $(0.002)$ & $0.220$ $(0.003)$\\
    \hline
  $\lambda_2$ & $0.764$  & $0.768$ $(0.006)$ & $0.751$ $(0.007)$& $0.749$ $(0.006)$\\ 
    \hline
$\lambda_3$ & $2.790$ & $2.810$ $(0.017)$ &$2.725$ $(0.022)$ &$2.737$ $(0.020)$\\
    \hline
  \end{tabular}
  \caption{Estimations of the eigenvalues $\lambda_1, \lambda_2, \lambda_3$ in the first example. 
 Column ``FVM, $d=2$'' shows the numerical solutions obtained using the finite
  volume method for $d=2$ (see~\eqref{ref-eigval-solutions}). 
  Columns with labels ``NN, $d=2$'', ``NN, $d=50$'', and ``NN, $d=100$'' show
  the means and the sample standard deviations (in brackets) of the
  eigenvalues estimated in the last $100$ training steps of
  Algorithm~\ref{algo-1} for $d=2$, $d=50$, and $d=100$, respectively.  The
  potential function is $V_2$ \eqref{pot-V2} for $d=2$, whereas the potential
  functions $V_d$ for $d=50$ and $d=100$ are defined in \eqref{pot-vd}.
  \label{tab-ex1}}
\end{table}

\begin{figure}[h!]
  \centering
  \includegraphics[width=13cm,height=9cm]{./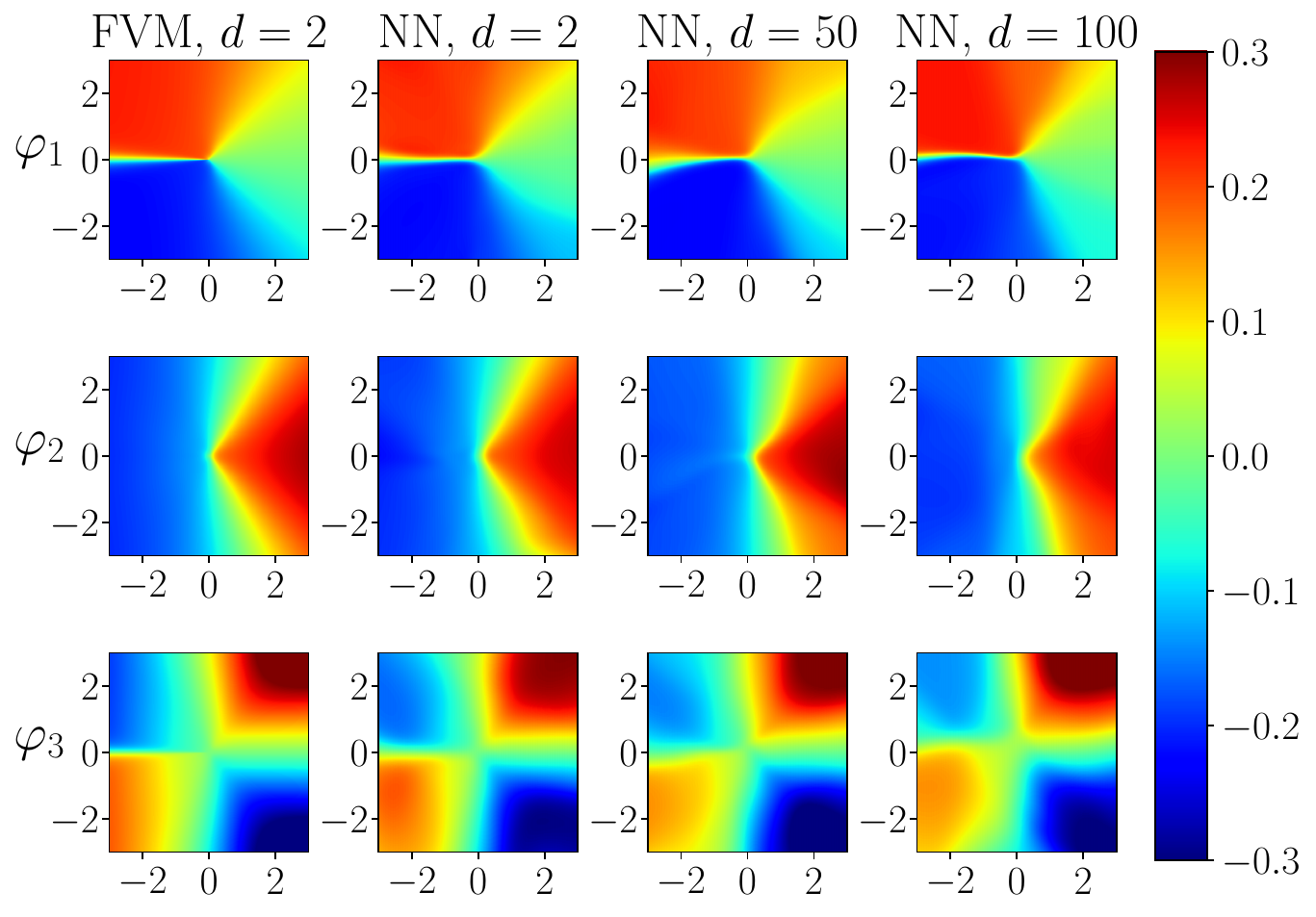}
  \caption{The first three eigenfunctions $\varphi_1, \varphi_2, \varphi_3$ in
  the first example, computed using the finite volume method for $d=2$ (column ``FVM,
  $d=2$'') and by training neural networks using Algorithm~\ref{algo-1} for
  $d=2$ (column ``NN, $d=2$''), $d=50$ (column ``NN, $d=50$''), and $d=100$
  (column ``NN, $d=100$''), respectively. The last three columns show the
  trained eigenfunctions after $J=7100$ training steps using
  Algorithm~\ref{algo-1}.  For $d=50$ and $d=100$, the third and the fourth
  columns show the eigenfunctions $\varphi_1, \varphi_2, \varphi_3$ as
  functions in the first two coordinates $x_1, x_2$, where the remaining coordinates $(x_3, \dots, x_d)$ are randomly selected 
according to certain centered Gaussian distribution. \label{fig-ex1-evs-all}}
\end{figure}

\begin{figure}[h!]
  \centering
  \includegraphics[width=12cm]{./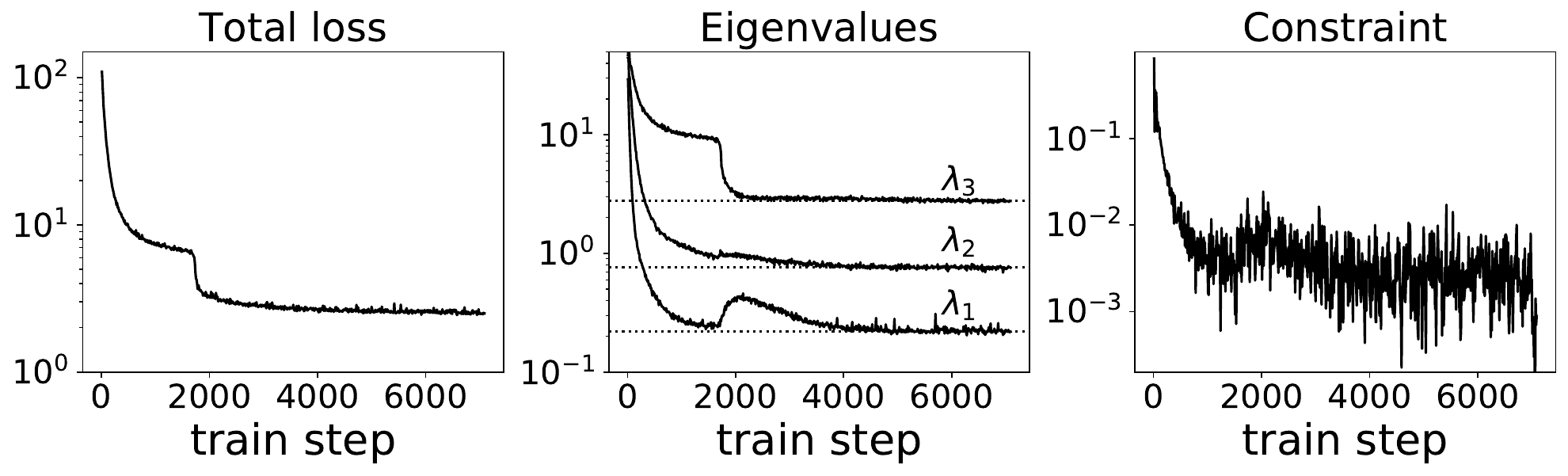}
  \caption{The evolution of quantities during the training procedure in the first example, where $d=100$.
  Left: the loss function \eqref{lambda-kth-variational-alpha}. Middle: estimations of the eigenvalues $\lambda_1$, $\lambda_2$, and $\lambda_3$. 
  The dotted horizontal lines show the reference solutions \eqref{ref-eigval-solutions} obtained using the finite volume method.
  Right: the quantity $\mathcal{C}$ in \eqref{ex1-penalty-c1} which corresponds to the penalty term in \eqref{lambda-kth-variational-alpha}. 
  \label{fig-ex1-loss_constraints}}
\end{figure}

\paragraph{Reference solution for $d=2$.}
Since \eqref{eigen-problem-rd} is a two-dimensional eigenvalue PDE problem
when $d=2$, the eigenvalues of $-\mathcal{L}_2$, given by
\eqref{generator-l-ex1}, can be solved numerically using the finite volume method~\cite{latorre2011}. Specifically, taking into account the profile of the potential surface of $V_2$ (see the right plot
of Figure~\ref{fig-ex1-pot2d}), we truncate the space $\mathbb{R}^2$ to the
finite domain $[-3.0, 3.0]\times [-3.0, 3.0]$, on which the PDE
\eqref{eigen-problem-rd} is discretized using a grid of size $400\times
400$ (see \cite{sz-entropy-2017} for details of the discretization). The
discretized matrix eigenvalue problem is then solved using Krylovschur method
implemented in \textbf{slepc4py}, which is a Python binding for the package
SLEPc~\cite{slepc}. In this way, we obtain the first three eigenvalues as
\begin{equation}
\lambda_1=0.219, \quad \lambda_2=0.764,\quad \lambda_3 =2.790\,. 
  \label{ref-eigval-solutions}
\end{equation}
These values in \eqref{ref-eigval-solutions} remain unchanged when we enlarge the truncated domain and when we refine the grid used for discretization.
The corresponding eigenfunctions $\varphi_1, \varphi_2, \varphi_3$ obtained
using the finite volume method are shown in the first column of Figure~\ref{fig-ex1-evs-all}.

\paragraph{Solutions for $d=2$, $50$, and $100$ using neural networks.} Next, we use Algorithms~\ref{algo-1} to compute the first three eigenpairs of $-\mathcal{L}_d$ \eqref{generator-l-ex1} for dimensions $d=2$, $d=50$ and $d=100$, respectively. 

For each $d\in \{2,50,100\}$, the invariant measure $\mu$ corresponding to the generator $\mathcal{L}_d$  has the density $\frac{1}{Z}
\mathrm{e}^{-\beta V_d}$, where $Z$ is the normalizing constant (depending on $d$). 
We first generate $n=5\times 10^6$ states in $\mathbb{R}^d$ from the scheme
\begin{equation}
  x^{(\ell)} = x^{(\ell-1)} - \nabla V_d(x^{(\ell-1)})\,\Delta t +
  \sqrt{2\beta^{-1}\Delta t} \bm{\eta}^{(\ell)}\,, \quad \ell = 1, 2, \dots, n\,,
  \label{direct-sde-discretization}
\end{equation}
starting from some initial state $x^{(0)}\in \mathbb{R}^d$, where the timestep
$\Delta t=0.001$ and $\bm{\eta}^{(\ell)}\in \mathbb{R}^d$, $1 \le \ell \le n$,
are i.i.d.\ standard Gaussian variables in $\mathbb{R}^d$. Note that
\eqref{direct-sde-discretization} is just the Euler-Maruyama scheme of the
SDE~\eqref{overdamped} when both $a$ and $\sigma$ are the identify matrix.
In this example we do not use the reweighting technique introduced in
Section~\ref{sec-algo-consideration}, since it is not difficult to sample states directly from the numerical scheme \eqref{direct-sde-discretization} whose invariant measure approximates $\mu$. 
In other words, we have $\upsilon_{\ell}=1$ for $1 \le \ell \le n$, where $\upsilon_{\ell}$ are the weights in \eqref{mean-wts-mu-empirical}.
As shown in Figure~\ref{fig-ex1-hist}, the empirical (marginal) probability densities of the sampled trajectory data
in $(x_1, x_2)$ are accordant with the probability density $\frac{1}{Z} \mathrm{e}^{-\beta V_2}$. 
This implies that the trajectories are sufficiently long, so that the
empirical estimator \eqref{mean-wts-mu-empirical}, as well as the estimator
\eqref{mean-wts-mu-empirical-batch} for large batch-sizes, are good
approximations of the true mean value $\mathbf{E}_\mu(f)$ (for reasonable functions $f$).

Using the sampled states as training data, we apply Algorithm~\ref{algo-1} to estimate the first three eigenpairs.
 We set $K=3$, the penalty parameter $\alpha=20$, and the coefficients $\omega_1=1.0$, $\omega_2=0.8$, and $\omega_3=0.6$ in the loss function \eqref{lambda-kth-variational-alpha}.
For each $d\in\{2,50,100\}$, each of the first three eigenfunctions is represented
by a neural network with the same network architecture 
\begin{equation}
\mathcal{N}=(d, 20, 20, 20,1)\,. 
  \label{ex1-arch}
\end{equation} 
In other words, the neural network has one input layer of size $d$, three
hidden layers of size $20$, and one output layer of size $1$ (see
Figure~\ref{fig-nn} for the illustration of neural networks). We use the activation function $\rho(x) = \tanh x$. 
In each test, in order to train the neural network, $J=7100$ training steps
are performed using the Adam optimization method~\cite{adam-KingmaB14} with learning rate $r= 0.005$.
The batch-size $B=5000$ is used for the first $7000$ steps.  The eigenvalues
are computed (see Table~\ref{tab-ex1}) as the mean values of the estimations
in the final $100$ training steps, i.e.,\ from step $7001$ to step $7100$,
where a large batch-size $B=20000$ is adopted.
As seen from Table~\ref{tab-ex1} and Figure~\ref{fig-ex1-evs-all}, Algorithm~\ref{algo-1} is able to approximate the first three eigenvalues in \eqref{ref-eigval-solutions} and their corresponding eigenfunctions. 
For $d=50, 100$, by inspecting the eigenfunctions at coordinates $(x_3, \dots,
x_d)$ that are randomly sampled according to certain centered Gaussian distribution, we find that they depend on the values of
$(x_3, \dots, x_d)$ rather weakly (see the last two columns of Figure~\ref{fig-ex1-evs-all}).
  Although the potentials \eqref{pot-vd} in this example are relatively simple, it is interesting to note that, by training neural
  networks with fully connected architecture \eqref{ex1-arch},
  Algorithms~\ref{algo-1} is able to identify the eigenfunctions which
  are functions of $(x_1, x_2)$ only. Figure~\ref{fig-ex1-loss_constraints}
  shows the evolution of the loss function
  \eqref{lambda-kth-variational-alpha}, the estimations of eigenvalues using
  \eqref{alg1-eigenpairs} (see Step~\ref{step-algo-1-stat} of Algorithm~\ref{algo-1}), and the quantity 
  \begin{equation}
      \mathcal{C} = \sum_{1\le i \le j \le K} 
    \Big(\ECov\big(\RmapNN(\Phi_{i}),\RmapNN(\Phi_{j}) \big) - \delta_{ij}\Big)^2 
    \label{ex1-penalty-c1}
  \end{equation}
during the training procedure in the case where $d=100$. The results for
$d=2,50$ are similar and therefore they are not shown here.
The quantity $\mathcal{C}$ in \eqref{ex1-penalty-c1} corresponds to the
penalty term in the loss function \eqref{lambda-kth-variational-alpha}.
One can observe from Figure~\ref{fig-ex1-loss_constraints} that both the loss function and the eigenvalues converge within $5000$ training steps.  The quantity $\mathcal{C}$ \eqref{ex1-penalty-c1} is fluctuating during the
training procedure due to the use of both mini-batch and the finite penalty parameter $\alpha=20$. Nevertheless, the magnitude of $\mathcal{C}$ stays
below $10^{-2}$ in most of the training steps, indicating that the constraints are well imposed during the training procedure.
Finally, we note that very similar results were obtained when we carried out the same numerical experiment with a larger network architecture $\mathcal{N}=(d, 25, 25,25,25, 1)$. 

\subsection{Alanine dipeptide}
\label{subsec-numerical-ad}
In the second example, we study the simple molecular system alanine dipeptide in vacuum.  
The system consists of $22$ atoms. Since each atom has three coordinates, the
full state of the system has dimension $66$. It is known that the dynamics of the system can be well described using two dihedral angles $\phi_1, \phi_2$ (see Figure~\ref{fig-alanine-illustration}). The system exhibits three metastable conformations, which are often named as
C5, C7eq and C7ax (see Figure~\ref{fig-ad-free-energy}). The transition
between the two conformations C7eq and C7ax occurs much more rarely comparing to the transition between the conformations C5 and C7eq.

\begin{figure}[t!]
  \centering
  \includegraphics[width=0.8\textwidth]{./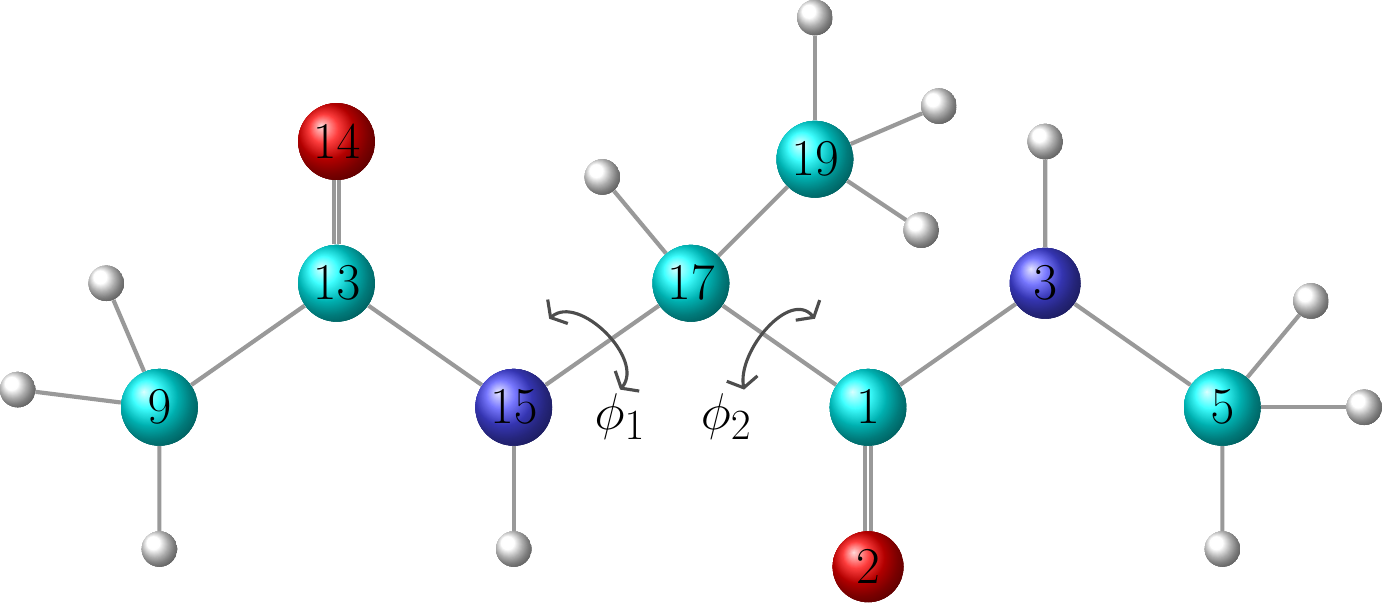}
  \caption{Alanine dipeptide in vacuum. The $22$ atoms of the system as well as the two dihedral angles $\phi_1, \phi_2$ are shown.
  The hydrogen atoms, carbon atoms, nitrogen atoms and oxygen atoms are
  displayed in gray, cyan, blue, and red, respectively. The indices of the $10$ non-hydrogen atoms are given by the numbers. 
  \label{fig-alanine-illustration}}
\end{figure}

We generate the trajectory data of the system using the NAMD software
package~\cite{namd-paper}. In all the simulations below, the system is simulated using Langevin dynamics at the temperature $T=\SI{300}{\kelvin}$ with the damping coefficient \SI{1}{\per\s} and the timestep \SI{1}{\femto\s}.
The dynamics of the system in the position space is ergodic
with respect to the unique invariant distribution $\mu$ \eqref{mu-invariant}
for some potential function $V:\mathbb{R}^{66}\rightarrow \mathbb{R}$, where
$\beta=(k_BT)^{-1}=1.678 (\si{\kilo cal \per \mole})^{-1}$ and $k_B$ denotes
the Boltzmann constant. The initial state of the system is prepared by performing $500$ energy
minimization steps followed by $10^6$ equilibration steps (i.e.,\ \SI{1}{\nano\s}).
Due to the metastability of the system, unbiased molecular dynamics simulation is computationally expensive
for generating trajectory data that is distributed according to the
invariant distribution $\mu$. Therefore, we use the reweighting technique discussed in Section~\ref{sec-algo-consideration} 
and we sample the data from a biased simulation.
Specifically, the training data and the test data are prepared in the following three steps.
\begin{itemize}
  \item[(1)] \textit{Computation of mean force and its potential using ABF.}
    In the first step, we simulate the system for \SI{20}{\nano\s} using
    the adaptive biasing force (ABF) method~\cite{free-energy-by-average-force,overcome-barrier-using-unconstrained-md,abf-everything}
    that is implemented in the \textbf{colvar} module of the NAMD
    package~\cite{colvar}. The two dihedral angles $\phi_1, \phi_2$ are used
    as collective variables in the ABF method, whose space
    $[\SI{-180}{\degree}, \SI{180}{\degree}) \times [\SI{-180}{\degree},
    \SI{180}{\degree})$ is discretized with grid size \SI{5}{\degree}. During the
    simulation, the mean force in each cell of the discretized grid of the
    dihedral angles is estimated using samples that fall in the cell, and is 
    applied to the system (when the system visits the cell) after $100$
    samples are collected. After the simulation, we obtain the mean force and
    its potential $V_{\mathrm{PMF}}$, i.e.,\ the potential of mean force
    (PMF), on the discrete grid of the dihedral angles (see Figure~\ref{fig-ad-free-energy}).
  \item[(2)] 
    \textit{Biasing force by rescaling the mean force}. As one can see in Figure~\ref{fig-ad-free-energy}, the magnitude of the potential $V_{\mathrm{PMF}}$ is quite large. 
    In this step, we rescale the mean force obtained in the previous step by $\eta=0.7$. 
    Accordingly, the potential of the rescaled mean force is $V_{\mathrm{bias}}=0.7V_{\mathrm{PMF}}$.
\item[(3)]
  \textit{Training data and test data by molecular dynamics simulations under fixed biasing force}.
We simulate the system for \SI{100}{\nano\s} using ABF, where the rescaled
    mean force in the previous step is applied to the system and is kept
    fixed during the simulation. The trajectory is recorded every
    \SI{1}{\pico\s}, resulting in a training data set $(x^{(\ell)})_{1\le
    \ell\le n}$ that consists of $n=10^5$ states.
    We denote by $\phi_1^{(\ell)}, \phi_2^{(\ell)}$ the two dihedral angles
    $\phi_1, \phi_2$ of the state $x^{(\ell)}\in \mathbb{R}^{66}$ for $\ell=1,2,\dots, n$.
    Then, the weights 
    \begin{equation}
    \upsilon_{\ell} = \frac{\exp(-\beta V_{\mathrm{bias}}(\phi_1^{(\ell)}, \phi_2^{(\ell)}))}{
      \frac{1}{n}\sum_{\ell'=1}^n \exp(-\beta V_{\mathrm{bias}}(\phi_1^{(\ell')},
      \phi_2^{(\ell')}))}, \quad \ell \in \{1,2,\dots, n\}
    \label{weights-v-ad}
    \end{equation}
    are used in estimating the mean values according to
    \eqref{mean-wts-mu-empirical}. See Figure~\ref{fig-ad-histograms} for the
    histogram of the angles $(\phi_1^{(\ell)}, \phi_2^{(\ell)})_{1\le \ell \le
    n}$ of the trajectory data and the profile of the weights as a function of
    the dihedral angles. Finally, we obtain the test data set of the same size by running another biased simulation independently with the same parameters.
\end{itemize}
\begin{figure}[t!]
  \centering
  \includegraphics[width=0.80\textwidth]{./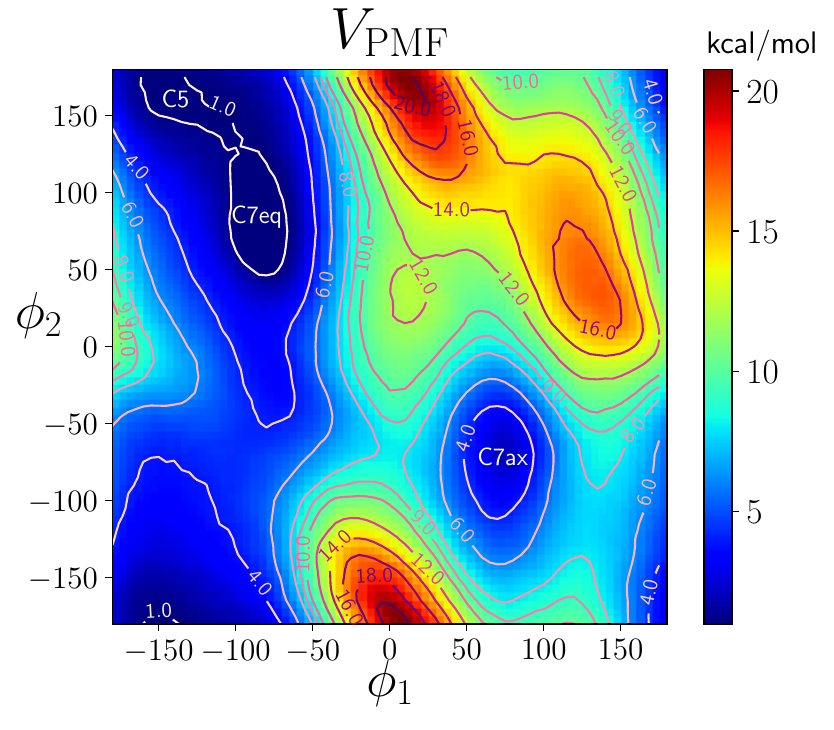}
  \caption{Potential of mean force $V_{\mathrm{PMF}}$ as a function of the two dihedral angles
  $\phi_1, \phi_2$ of alanine dipeptide, computed using the adaptive biasing method in the NAMD package. 
 The system has three metastable conformations, which are termed as C5, C7eq and
  C7ex, respectively. These three conformations correspond to the three regions where the value of the potential $V_{\mathrm{PMF}}$ is low.
  \label{fig-ad-free-energy}}
\end{figure}

Let us point out that, alternative to the ABF method, sampling techniques such
as Metadynamics~\cite{Laio_2008,Laio12562}, the extended system ABF (eABF)
method~\cite{EABF} can be used in preparing data as well. It is also possible
to employ sampling methods that do not require the knowledge of collective
variables, such as the simulated tempering~\cite{Marinari_1992} and the replica exchange molecular dynamics~\cite{RHEE2003775}.
However, in this case, the estimator~\eqref{mean-wts-mu-empirical}
has to be modified in order to estimate the mean values in the loss function.

\begin{figure}[t!]
  \centering
  \includegraphics[width=0.49\textwidth]{./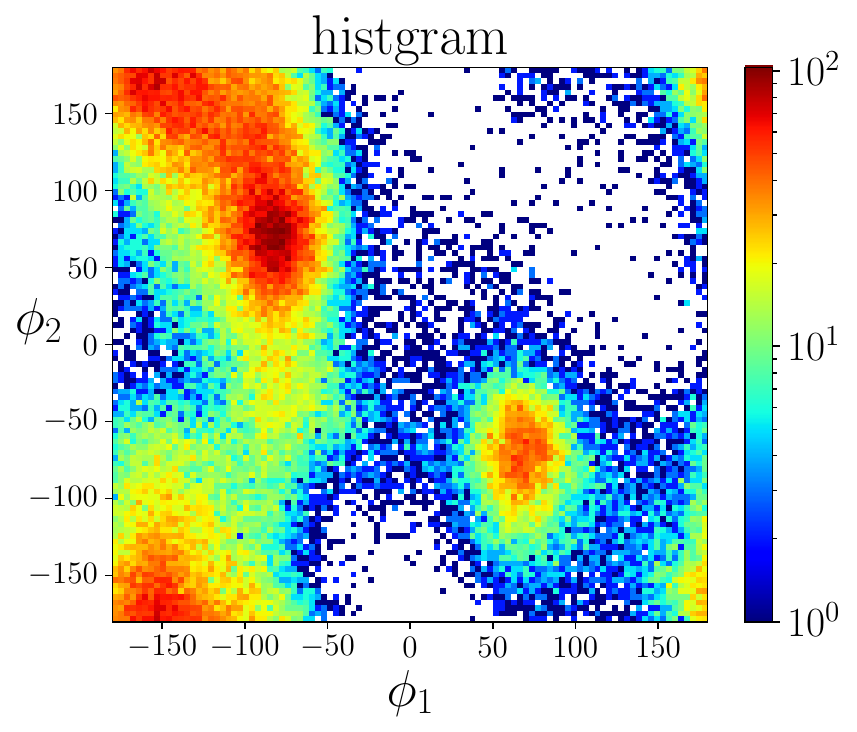}
  \includegraphics[width=0.47\textwidth]{./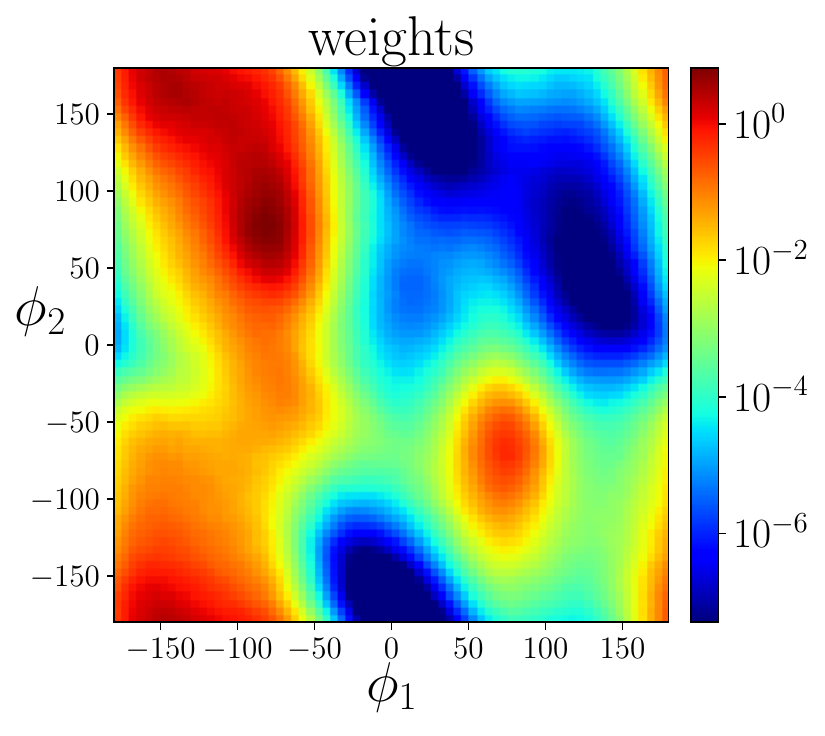}
  \caption{Left: histogram of the two dihedral angles $\phi_1$ and $\phi_2$ of
  alanine dipeptide. The system is simulated for \SI{100}{\nano\s}, under the fixed biasing force whose corresponding potential is $V_{\mathrm{bias}}=
  0.7V_{\mathrm{PMF}}$ (see Figure~\ref{fig-ad-free-energy} for the profile of
  $V_{\mathrm{PMF}}$). By recording the states every \SI{1}{\pico\s}, $10^5$ states of
  the trajectory are obtained in total, which are used to plot the histogram.
  Right: weights proportional to $\exp(-\beta V_{\mathrm{bias}})$ as a function of the dihedral angles (see \eqref{weights-v-ad}).
  \label{fig-ad-histograms}}
\end{figure}

With the training data prepared above, we compute the leading eigenpairs of the problem \eqref{eigen-problem-rd} by applying Algorithm~\ref{algo-1}, where the generator is 
  \begin{align}
    \begin{split}
\mathcal{L}f =& -\frac{D}{k_BT} \nabla V \cdot \nabla f + D \Delta f
    \end{split}
    \label{generator-l-ad}
  \end{align}
  for a test function $f: \mathbb{R}^{66}\rightarrow \mathbb{R}$, and $D>0$ is the diffusion coefficient.
  Equivalently, we are considering the SDE 
  \begin{align}
    dx(s) = - \frac{D}{k_BT}\nabla V(x(s))\,ds + \sqrt{2D} \, dw(s)\,,~ s \ge 0\,,
    \label{overdamped-ad}
  \end{align}
  where $x(s)\in \mathbb{R}^{66}$ and $(w(s))_{s \ge 0}$ is a standard Brownian motion in $\mathbb{R}^{66}$.
Without loss of generality, we assume that the indices of the coordinates
$x=(x_1, x_2, \dots, x_{66})$ are ordered in a way such that the coordinates
of the non-hydrogen atoms are $((x_{3(i-1)+1}, x_{3(i-1)+2}, x_{3(i-1)+3}))_{1 \le i \le 10}$.
We define $\mathbf{x}_i=(x_{3(i-1)+1}, x_{3(i-1)+2}, x_{3(i-1)+3})$ for $i\in
\{1,2,\dots, 10\}$ and set $\mathbf{x} = (x_1,x_2, \dots, x_{10})^T$ (note that the ordering here is different from the indices in Figure~\ref{fig-alanine-illustration}).
  In the following numerical tests we choose $D= \SI{e-5}{\cm^2\per\s}$.

  As in the work~\cite{committor-by-nn,chasing-cv-autoencodes-biased-traj}, we
  approximate the eigenfunctions by functions of $\mathbf{x}\in \mathbb{R}^{30}$, i.e., the coordinates of the $10$
  non-hydrogen atoms (see Figure~\ref{fig-alanine-illustration}). To guarantee
  the eigenfunctions after training are invariant under both rotations and
  translations, the Cartesian coordinates $\mathbf{x}$ of the non-hydrogen atoms in the
  trajectory data are aligned with respect to the coordinates of the same
  predetermined reference configuration (such that the root mean squared
  deviation is minimized) before they are passed to the neural networks. Specifically, 
  we define the map $F_{\mathrm{opt}}: \mathbb{R}^{30}\rightarrow \mathbb{R}^{30}$ as
  \begin{equation}
    \begin{aligned}
    F_{\mathrm{opt}}(\mathbf{x}) =& \Big((\mathbf{x}_1 -
      \mathbf{b}_{\mathrm{opt}}(\mathbf{x}))A_{\mathrm{opt}}(\mathbf{x}),
    (\mathbf{x}_2 -
    \mathbf{b}_{\mathrm{opt}}(\mathbf{x}))A_{\mathrm{opt}}(\mathbf{x}),\\
    &\quad \dots, (\mathbf{x}_{10} - \mathbf{b}_{\mathrm{opt}}(\mathbf{x}))A_{\mathrm{opt}}(\mathbf{x})\Big)^T\,, 
    \end{aligned}
    \label{map-F-ad}
  \end{equation}
  where, for given $\mathbf{x}$, $A_{\mathrm{opt}}(\mathbf{x})\in \mathbb{R}^{3\times 3}, \mathbf{b}_{\mathrm{opt}}(\mathbf{x})\in \mathbb{R}^3$ are the optimal rotation matrix and the optimal translation vector,
  respectively, which minimize the root mean squared deviation of $\mathbf{x}$ from the reference configuration.
  In practice, $\mathbf{b}_{\mathrm{opt}}(\mathbf{x})$ is easily determined by
  matching the centers of atoms, whereas $A_{\mathrm{opt}}(\mathbf{x})$ can be
  numerically computed using the Kabsch algorithm~\cite{Kabsch}.

The eigenfunctions are approximated by functions that are of the form
${\RmapNN(\Phi)\circ F_{\mathrm{opt}}(\mathbf{x})}$, where $\Phi$ is a neural network with the network architecture 
\begin{equation}
\mathcal{N}=(30, 20, 20, 20, 1)\,. 
  \label{ex2-arch}
\end{equation} 
In other words, the Cartesian coordinates $\mathbf{x}\in \mathbb{R}^{30}$ of
the non-hydrogen atoms are aligned using the map
$F_{\mathrm{opt}}$~\eqref{map-F-ad}. Then, they are passed to the neural network
which has three hidden layers of equal size $20$ and one output layer of size
$1$. It is clear that the functions represented in the form above are invariant under both rotations and translations.
As in the previous example, we use $\rho(x) = \tanh x$ as the activation function.

We start by computing the first eigenpair $(\lambda_1, \varphi_1)$ of $-\mathcal{L}$ given in \eqref{generator-l-ad}. We
apply Algorithm~\ref{algo-1} with $K=1$, where we train the neural network
using the Adam optimization method for $J=20000$ training steps.  In all these
$20000$ steps, we use the batch-size $B=10000$, the learning rate $r=0.001$, and the penalty constant $\alpha=20$.
The mean of the first eigenvalue estimated in the last $4000$ training steps is 
\begin{equation}
  \lambda_1 = \SI{0.047}{\nano\s^{-1}}\,, 
  \label{ex2-eig-1st}
\end{equation}
with the sample standard deviation $\SI{0.005}{\nano\s^{-1}}$.  The eigenfunction $\varphi_1$ approximated by the trained neural network at the end of the training procedure is shown in Figure~\ref{fig-ad-eigenfunction-1st}.
Specifically, in the left (right) plot in Figure~\ref{fig-ad-eigenfunction-1st},
representative states in the training (test) data are placed in the angle space according to their dihedral angles $\phi_1, \phi_2$
and are colored according to the values of the first eigenfunction
$\varphi_1$. One clearly observes that the first eigenfunction $\varphi_1$
given by Algorithm~\ref{algo-1} is close to a constant within each of the
metastable regions (see Figure~\ref{fig-ad-free-energy}). The profile of
$\varphi_1$ separates the conformation C7ax from the other two conformations C5 and C7eq.
Moreover, comparing the two plots in Figure~\ref{fig-ad-eigenfunction-1st}, we
see that the eigenfunction $\varphi_1$ has very similar profiles on both
the training data and the test data, implying that the trained neural network (therefore the eigenfunction) has satisfactory generalizability.
To further verify the numerical estimation of the eigenvalue $\lambda_1$ in \eqref{ex2-eig-1st}, 
we have repeated the numerical study with a different set of training data,
sampled under the mean force that is rescaled by $\eta = 0.8$ (correspondingly, $V_{\mathrm{bias}} = 0.8 V_{\mathrm{PMF}}$).
In this case, the mean of $\lambda_1$ estimated in the last $4000$ training steps is
$\lambda_1 = \SI{0.044}{\nano\s^{-1}}$, with the sample standard deviation
$\SI{0.004}{\nano\s^{-1}}$. Moreover, a numerical study was carried out using a
larger network architecture $\mathcal{N}=(30, 25, 25, 25, 25, 1)$, which
yields the mean value $\lambda_1 = \SI{0.045}{\nano\s^{-1}}$, with the sample
standard deviation $\SI{0.006}{\nano\s^{-1}}$. These numerical experiments confirm that the numerical estimation in \eqref{ex2-eig-1st} is stable
with different choices of training data and neural network architectures.
\begin{figure}[t!]
  \centering
  \includegraphics[width=0.95\textwidth]{./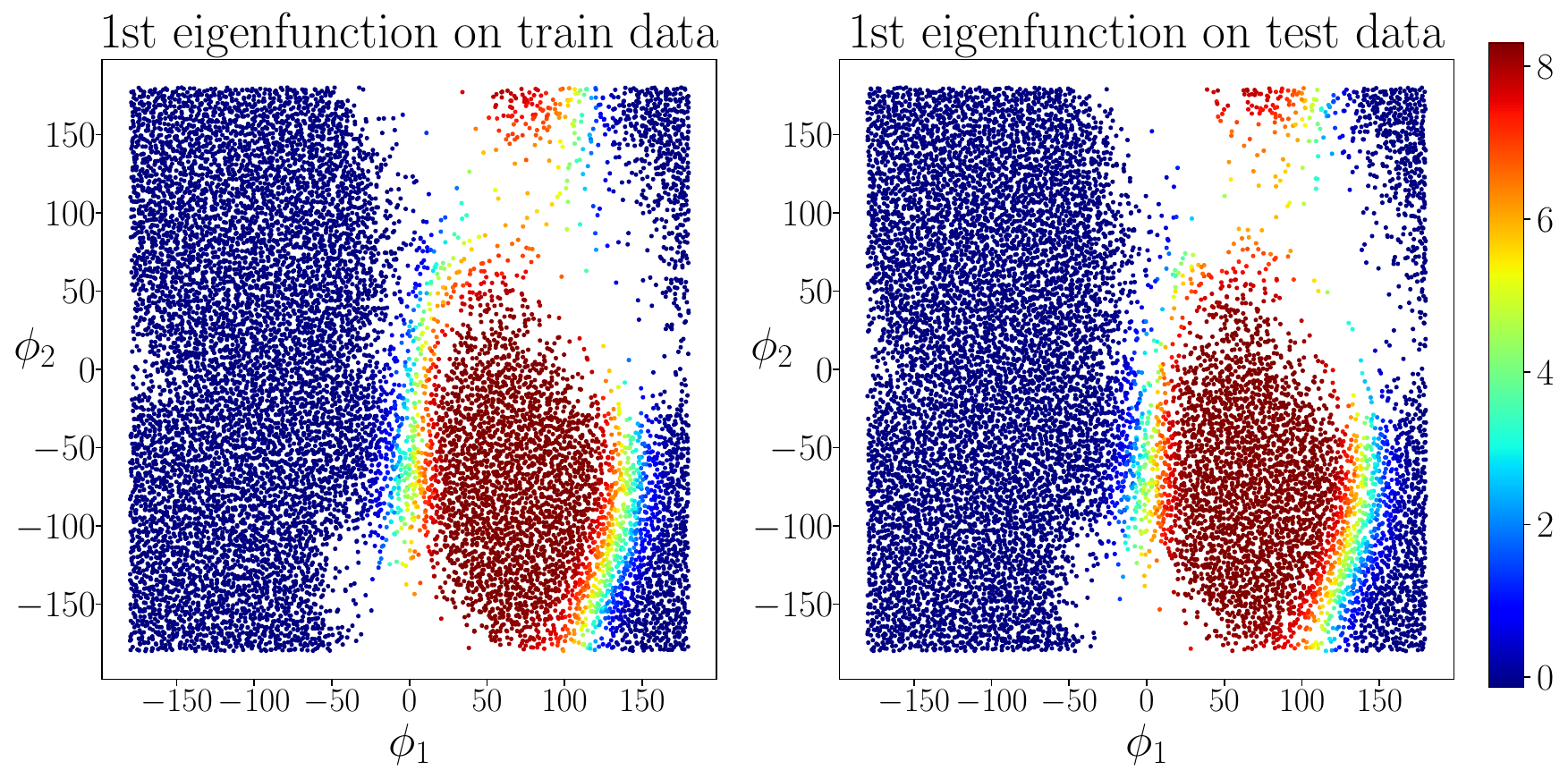}
  \caption{
For the alanine dipeptide example, the first eigenfunction $\varphi_1$ learned by training
  the neural network using Algorithm~\ref{algo-1} is evaluated on the training data
  (left) and on the test data (right). Selected states in the data sets are positioned as points according to
  their dihedral angles $\phi_1, \phi_2$, and are colored according to the values of the first eigenfunction $\varphi_1$.
  \label{fig-ad-eigenfunction-1st}}
\end{figure}

We have also computed the second eigenpair $(\lambda_2, \varphi_2)$ by applying Algorithm~\ref{algo-1} with $K=2$. 
Knowing a priori that the magnitude of the second eigenvalue $\lambda_2$
(which corresponds to the transition between C5 and C7eq; see the discussion below) is much
larger than that of $\lambda_1$ in \eqref{ex2-eig-1st}, in this test we choose the coefficients $\omega_1=1.0$ and $\omega_2=0.05$. All the other parameters are the same as those used in the previous test for computing the first eigenpair.  
After training the neural networks, we obtain numerical results of the first two eigenpairs. 
For the first eigenpair, both the estimation of $\lambda_1$ and the profile of
the eigenfunction $\varphi_1$ are very close to the results obtained in the previous
test. See \eqref{ex2-eig-1st} and Figure~\ref{fig-ad-eigenfunction-1st}, respectively.
For the second eigenpair, the mean of the eigenvalue $\lambda_2$ estimated in the last $4000$ training steps is 
\begin{equation}
  \lambda_2 = \SI{23.92}{\nano\s^{-1}}\,,
  \label{ex2-eig-2nd}
\end{equation}
with the sample standard deviation $\SI{0.60}{\nano\s^{-1}}$. Similar as in
the previous test, the left (right) plot of
Figure~\ref{fig-ad-eigenfunction-2nd} shows the second eigenfunction
$\varphi_2$ at representative states in the training (test) data set.  In
contrast to the first eigenfunction $\varphi_1$ (Figure~\ref{fig-ad-eigenfunction-1st}), 
the values of the second eigenfunction $\varphi_2$ have different signs in the
two conformational regions corresponding to C5 and C7eq (see Figure~\ref{fig-ad-free-energy} and
recall the periodic boundary conditions). This
indeed confirms that the second eigenpair is linked to the transitional events
of alanine dipeptide between the two conformations C5 and C7eq.
The fact that the estimated second eigenvalue $\lambda_2$ in \eqref{ex2-eig-2nd} is much larger than $\lambda_1$ in \eqref{ex2-eig-1st} is also consistent with the fact that the transition between C5 and C7eq is much more frequent than the transition to C7ax.

\begin{figure}[t!]
  \centering
  \includegraphics[width=0.95\textwidth]{./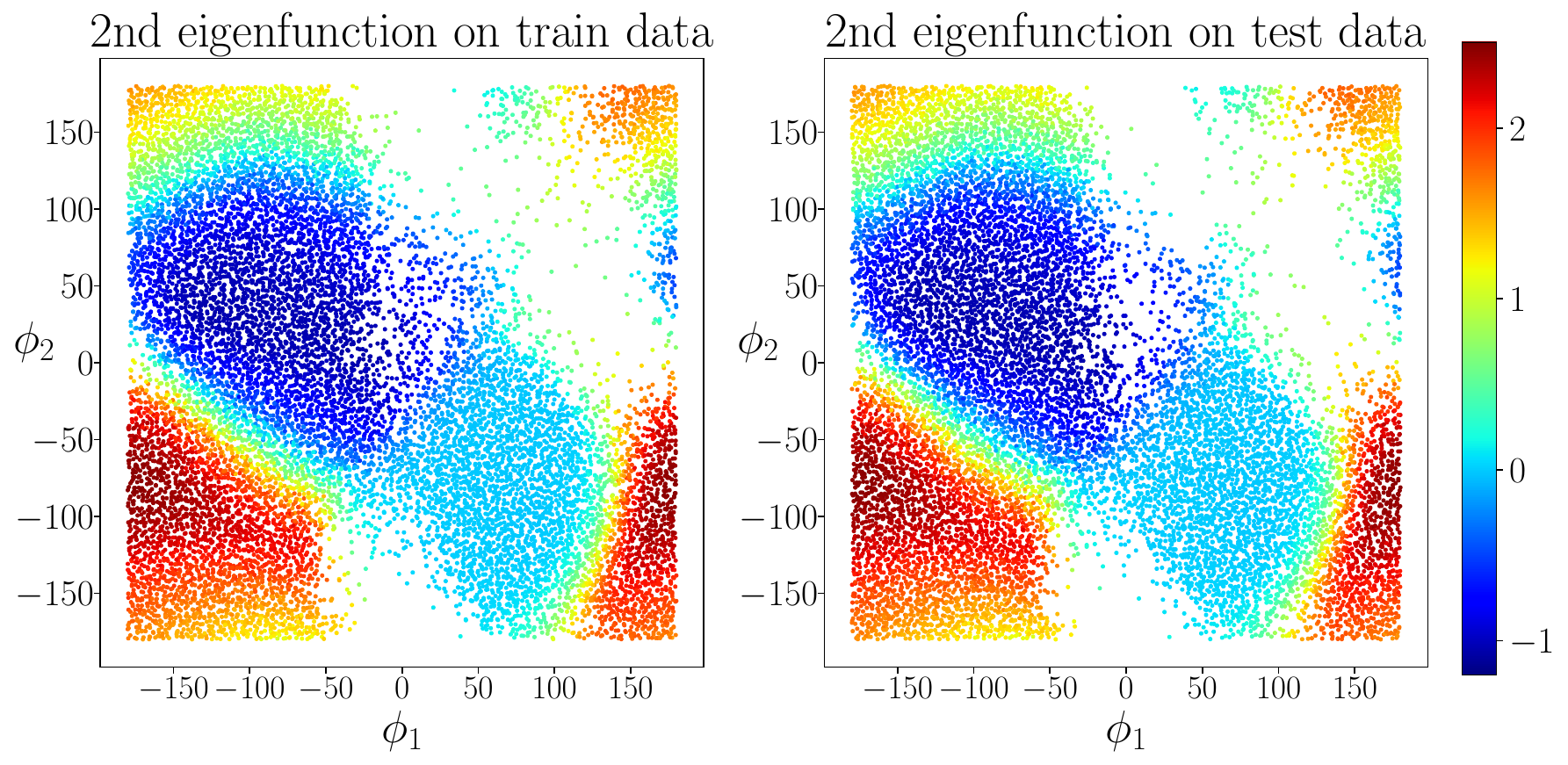}
  \caption{For the alanine dipeptide example, the second eigenfunction $\varphi_2$ learned by training neural networks
  using Algorithm~\ref{algo-1} is evaluated on the training data (left) and on the test data (right). 
  Selected states in the data sets are positioned as points according to their
  dihedral angles $\phi_1, \phi_2$, and are colored according to the values of the second eigenfunction $\varphi_2$.
  \label{fig-ad-eigenfunction-2nd}}
\end{figure}

\section{Conclusion and Discussions}
\label{sec-discuss}
In this paper, we have studied numerical methods for solving high-dimensional eigenvalue PDEs by training artificial neural networks. The
algorithm proposed in this work allows us to compute multiple eigenvalues and
the corresponding eigenfunctions of the eigenvalue PDEs. The numerical
experiments on two concrete examples demonstrate the capability of the method
in estimating large timescales and unveiling the transition mechanism of high-dimensional metastable diffusion processes.
In the following we discuss several related research topics that we would like to consider in future work. 

\textit{Learning collective variables for complex molecular systems.}
Various approaches have been developed in recent years for finding good
collective variables of molecular systems~\cite{vampnet,machine-learning-force-field-cg-variables,chen-tan-ferguson-cv-discovery-2018,chen-ferguson-on-the-fly-cv-discovery-2018,chasing-cv-autoencodes-biased-traj}.  Related to the current work, the previous
work~\cite{effective_dyn_2017} argued that the leading
eigenfunctions of the system's generator provide collective variables that are optimal in certain sense.  In future work, we will 
apply the algorithm developed in the current work in devising novel numerical
approaches for the discovery of the collective variables of molecular systems.

\textit{Application to eigenvalue problems in quantum physics.} 
Schr{\"o}dinger eigenvalue problems play a central role in quantum physics.
Thanks to the powerful capability of neural networks, numerical algorithms for solving
high-dimensional Schr{\"o}dinger eigenvalue problems are gaining research
attentions in recent years~\cite{schroedinger-eigenpde,HAN2019-solve-many-electron-schroedinger-equation,dnn-electron-schroedinger-noe,ab-initio-many-electron-schrodinger-dnn,nuesken-richter,jin-mattheakis-protopapas-pinn-quantum}.
Also see~\cite{LAGARIS19971} for an earlier work. In future, we will consider applying our numerical algorithm in solving quantum eigenvalue problems.
In particular, its potential application in computing the excited states of quantum systems will be explored. 

\textit{Alternative methods for training under constraints.} In our algorithm, the
constraints on the eigenfunctions are imposed softly using penalty method. This leads
to a simple unconstrained training task, which however involves a penalty constant $\alpha$ that has to be determined empirically. 
Although we find that our algorithm performs well in practice for a wide range
of $\alpha$, it is expected that the choice of $\alpha$ indeed plays a role in the training
procedure. Very large $\alpha$ would introduce stiffness to the problem which
in turn restricts the size of the learning rate in training, whereas a very
small $\alpha$ would make the constraints ineffective.
In future work, we will study the dependence of the algorithm on $\alpha$, as
well as alternative methods for handling constraints, such as sampling schemes
with constraints~\cite{zhang2017,multiple-projection-mcmc-submanifolds}, the
conditional gradient method and its variants~\cite{pmlr-v80-qu18a,gradient-projection-cgm-constrained-nonconvex},
and the constrained training algorithms for neural networks~\cite{leimkuhler-constraint-based-training}.

\textit{Application to more sophisticate molecular dynamics applications.} In this work we
have tested our algorithm on simple alanine dipeptide molecular system. 
Based on these experience, more sophisticate molecular systems will be studied in the next step.

\section*{Acknowledgments} 
W.\ Zhang thanks Tony Leli\`evre for fruitful discussions on the numerical
treatment of rotational and translational invariance in the alanine dipeptide
example. T.\ Li is supported by the NSFC under grant Nos. 11421101 and
11825102. The work of C.\ Sch\"utte and W.\ Zhang is supported by the DFG under Germany's Excellence Strategy-MATH+: The Berlin Mathematics Research Centre (EXC-2046/1)-project ID:390685689.
\appendix
\section{Proofs in Section~\ref{sec-theoretical-background}}
\label{sec-app-proof}
In this section, we prove Proposition~\ref{prop-spectrum} and
Theorem~\ref{thm-variational-form} in Section~\ref{sec-theoretical-background}.

  \begin{proof}[Proof of Proposition~\ref{prop-spectrum}]
For any $f \in L^2_0(\mu)$, Lemma~\ref{lemma-poisson} implies that $g=(-\mathcal{L})^{-1}f\in \mathcal{H}^1$.
Using Cauchy-Schwarz inequality and applying \eqref{poincare-ineq} to~$g$, we find
 \begin{equation*}
\|g\|^2_1 = \langle g, -\mathcal{L}g\rangle_\mu \le
\|g\|_\mu \|\mathcal{L}g\|_\mu \le 
   \sqrt{\frac{1}{\lambda}} \langle g,
   (-\mathcal{L})g\rangle_\mu^{\frac{1}{2}}\, \|\mathcal{L}g\|_\mu =
   \sqrt{\frac{1}{\lambda}} \|g\|_1 \|\mathcal{L} g\|_\mu\,,
 \end{equation*}
 which implies that $\|g\|_1 \le \sqrt{\frac{1}{\lambda}} \|\mathcal{L}g\|_\mu$, or equivalently, 
  \begin{equation}
    \|(-\mathcal{L})^{-1}f\|_1 \le \sqrt{\frac{1}{\lambda}}\|f\|_\mu\,, \quad \forall~ f \in L^2_0(\mu)\,.
    \label{l1-bounded-by-l0}
  \end{equation}

    To show that the operator $(-\mathcal{L})^{-1}:
    L^2_0(\mu)\rightarrow L^2_0(\mu)$ is compact (see \cite[Lemma 6.9]{teschl2009mathematical} and~\cite[Section
     VI.5]{reed1981functional} for equivalent definitions of compact operators),
    we consider any sequence of functions $(f_i)_{i\ge 1}$ that are bounded in $L^2_0(\mu)$. Define $g_i=(-\mathcal{L})^{-1}f_i$ for $i \ge 1$.
    The inequality \eqref{l1-bounded-by-l0} implies that the sequence $(g_i)_{i\ge 1}$ is bounded in $\mathcal{H}^1$.
    Since the embedding $\mathcal{H}^1 \hookrightarrow L_0^2(\mu)$ is compact
    by Lemma~\ref{lemma-poisson}, there is a subsequence of $(g_i)_{i\ge 1}$ which converges in $L^2_0(\mu)$.
    This shows that $(-\mathcal{L})^{-1}$ is a compact operator. 

    Concerning the second item, note that the first item implies that
    the operator $(-\mathcal{L}-\lambda I)^{-1}$ is compact for $\lambda = 0$. 
    Applying~\cite[Theorem XIII.64]{reed1978methods}, we know that there
    exists an orthonormal basis $(\varphi_i)_{i \ge 1}$ of $L^2_0(\mu)$, such that 
    $\varphi_i \in D(\mathcal{L})$ and $-\mathcal{L}\varphi_i=\lambda_i
    \varphi_i$ for $i \ge 1$, where $\lambda_1 \le \lambda_2 \le \cdots$ and $\lim_{i\rightarrow +\infty} \lambda_i =+\infty$. 
From this fact, it is not difficult to argue that the spectrum of $-\mathcal{L}$ consists of the discrete eigenvalues $(\lambda_i)_{i\ge 1}$.
  \end{proof}

\begin{proof}[Proof of Theorem~\ref{thm-variational-form}]
  Let $f_1, f_2, \dots, f_K\in \mathcal{H}^1$ be $K$ functions such that \eqref{f-orthonormal} holds.
  Using the fact that $\Sigma$ \eqref{mat-sigma} is a diagonal matrix and the diagonal elements of $F^{(K)}(f_1,f_2,\dots,
  f_K)$ in \eqref{def-f-k} are $\mathcal{E}(f_1), \mathcal{E}(f_2),\dots
  \mathcal{E}(f_K)$ (see \eqref{energy}), we find
  \begin{equation}
 \sum_{i=1}^K \omega_i \mathcal{E}(f_i) = \mbox{\textnormal{tr}} \big(\Sigma F^{(K)}(f_1,f_2,\dots, f_K)\big) \,,
    \label{trace-formula}
  \end{equation}
  which is the second equality of \eqref{variational-all-first-k}.

  Next, we show the first identity in \eqref{variational-all-first-k}. Using 
\eqref{def-f-k} and applying the Poincar{\'e} inequality \eqref{poincare-ineq}, we find that 
  \begin{equation}
    c^T F^{(K)}(f_1, f_2, \dots, f_K)c = \mathcal{E}\Big(\sum_{i=1}^K c_i f_i\Big) \ge \lambda \Big\|\sum_{i=1}^K c_i f_i\Big\|_\mu\,,\quad \forall c \in
    \mathbb{R}^K\,,
    \label{cfc}
  \end{equation}
  for some $\lambda > 0$. Since $f_1, \dots, f_K$ are linearly independent due to \eqref{f-orthonormal},
  the inequality \eqref{cfc} implies that $F^{(K)}(f_1, f_2, \dots, f_K)$ is positive definite, 
  and we denote its eigenvalues as $0 < \widetilde{\lambda}_1 \le \widetilde{\lambda}_2 \le \dots \le \widetilde{\lambda}_K$. 
  Applying Ruhe's trace inequality~\cite[H.1.h, Section H, Chapter 9]{marshall-inequalities-book},  we obtain from \eqref{trace-formula} that
  \begin{equation}
    \sum_{i=1}^K\omega_i \mathcal{E}(f_i) = \mbox{\textnormal{tr}} \big(\Sigma F^{(K)}(f_1,f_2,\dots, f_K)\big) \ge \sum_{i=1}^K \omega_i \widetilde{\lambda}_i \,.
    \label{bound-1}
  \end{equation}
 Let us show that $\widetilde{\lambda}_k \ge \lambda_k$ for $k\in \{1,2,\dots,
  K\}$. For this purpose, applying the min-max principle for symmetric matrices gives
  \begin{equation}
    \widetilde{\lambda}_k = \min_{S_k} \max_{c\in S_k, |c|=1}
    c^TF^{(K)}(f_1,f_2,\dots, f_K) c= \min_{S_k} \max_{c\in S_k, |c|=1} \mathcal{E}\big(\sum_{i=1}^K c_i f_i\big)\,,
    \label{lambda-k-tilde}
  \end{equation}
  where $S_k$ goes over all $k$-dimensional subspaces of $\mathbb{R}^K$ and
  the second equality follows from direct calculation using \eqref{def-f-k}. 
Since $(f_i)_{1\le i\le K}\subset \mathcal{H}^1$ satisfies the orthonormality condition \eqref{f-orthonormal},
  each $k$-dimensional subspace $S_k\subset \mathbb{R}^K$ defines a $k$-dimensional subspace of $\mathcal{H}^1$ by $\widetilde{H}_k=\big\{\sum_{i=1}^K c_if_i\,|\, c\in S_k\big\}$ such that  $\widetilde{H}_k\subset \mbox{span}\{f_1,f_2, \dots, f_K\}$. 
  On the contrary, every $k$-dimensional subspace $\widetilde{H}_k \subset
  \mbox{span}\{f_1,f_2, \dots, f_K\}$ can be written in this way for some $k$-dimensional subspace $S_k\subset \mathbb{R}^K$.
 Therefore, using \eqref{lambda-u-variational-ith}, we find from \eqref{lambda-k-tilde} that 
  \begin{equation}
    \widetilde{\lambda}_k = \min_{S_k} \max_{c\in S_k, |c|=1} \mathcal{E}\big(\sum_{i=1}^k c_k f_k\big)
    = \min_{\widetilde{H}_k} \max_{f \in \widetilde{H}_k, \|f\|_\mu=1} \mathcal{E}(f) \ge \lambda_k\,.
    \label{cmp-eigen-of-f-to-real}
  \end{equation}
  Combining \eqref{cmp-eigen-of-f-to-real} and \eqref{bound-1}, gives 
  \begin{equation*}
    \sum_{i=1}^K\omega_i \mathcal{E}(f_i) = \mbox{\textnormal{tr}} \big(\Sigma
    F^{(K)}(f_1,f_2,\dots, f_K)\big) \ge \sum_{i=1}^K \omega_i \widetilde{\lambda}_i  \ge \sum_{i=1}^K \omega_i \lambda_i\,.
  \end{equation*}
    Since the eigenfunctions $(\varphi_i)_{1\le i\le K}$ satisfy \eqref{f-orthonormal}
    and we have the identity $\sum_{i=1}^K\omega_i \mathcal{E}(\varphi_i) = \sum_{i=1}^K \omega_i
    \lambda_i$, we conclude that the first equality in \eqref{variational-all-first-k} holds and the minimum is achieved when
  $f_i=\varphi_i$ for $i\in \{1,2,\dots, K\}$.
\end{proof}

\bibliographystyle{siamplain}
\bibliography{reference}

\begin{thebibliography}{10}

\bibitem{gradient-projection-cgm-constrained-nonconvex}
{\sc M.~V. Balashov, B.~T. Polyak, and A.~A. Tremba}, {\em Gradient projection
  and conditional gradient methods for constrained nonconvex minimization},
  Numer. Funct. Anal. Optim., 41 (2020), pp.~822--849,
  \url{https://doi.org/10.1080/01630563.2019.1704780}.

\bibitem{chasing-cv-autoencodes-biased-traj}
{\sc Z.~Belkacemi, P.~Gkeka, T.~Leli{\`e}vre, and G.~Stoltz}, {\em {Chasing
  collective variables using autoencoders and biased trajectories}},  (2021),
  \url{https://arxiv.org/abs/2104.11061}.

\bibitem{metastability-bovier}
{\sc A.~Bovier and F.~den Hollander}, {\em Metastability: A Potential-Theoretic
  Approach}, Grundlehren der Mathematischen Wissenschaften, Springer
  International Publishing, 2015,
  \url{https://doi.org/10.1007/978-3-319-24777-9}.

\bibitem{LeBrisLelievreLuskinPerez+2012+119+146}
{\sc C.~L. Bris, T.~Leli{\`e}vre, M.~Luskin, and D.~Perez}, {\em A mathematical
  formalization of the parallel replica dynamics}, Monte Carlo Methods Appl.,
  18 (2012), pp.~119--146, \url{https://doi.org/10.1515/mcma-2012-0003}.

\bibitem{Koopmanism}
{\sc M.~Budi\v{s}i{\'c}, R.~Mohr, and I.~Mezi{\'c}}, {\em Applied
  {Koopmanism}}, Chaos, 22 (2012), p.~047510,
  \url{https://doi.org/10.1063/1.4772195}.

\bibitem{chen-ferguson-on-the-fly-cv-discovery-2018}
{\sc W.~Chen and A.~L. Ferguson}, {\em Molecular enhanced sampling with
  autoencoders: On-the-fly collective variable discovery and accelerated free
  energy landscape exploration}, J. Comput. Chem., 39 (2018), pp.~2079--2102,
  \url{https://doi.org/10.1002/jcc.25520}.

\bibitem{state-free-vampnets}
{\sc W.~Chen, H.~Sidky, and A.~L. Ferguson}, {\em Nonlinear discovery of slow
  molecular modes using state-free reversible vampnets}, J. Chem. Phys., 150
  (2019), p.~214114, \url{https://doi.org/10.1063/1.5092521}.

\bibitem{chen-tan-ferguson-cv-discovery-2018}
{\sc W.~Chen, A.~R. Tan, and A.~L. Ferguson}, {\em Collective variable
  discovery and enhanced sampling using autoencoders: Innovations in network
  architecture and error function design}, J. Chem. Phys., 149 (2018),
  p.~072312, \url{https://doi.org/10.1063/1.5023804}.

\bibitem{CHODERA-noe-msm-2014}
{\sc J.~D. Chodera and F.~No{\'e}}, {\em Markov state models of biomolecular
  conformational dynamics}, Curr. Opin. Struct. Biol., 25 (2014), pp.~135--144,
  \url{https://doi.org/10.1016/j.sbi.2014.04.002}.

\bibitem{abf-everything}
{\sc J.~Comer, J.~C. Gumbart, J.~H{\'e}nin, T.~Leli{\`e}vre, A.~Pohorille, and
  C.~Chipot}, {\em The adaptive biasing force method: everything you always
  wanted to know but were afraid to ask}, J. Phys. Chem. B, 119 (2015),
  pp.~1129--1151, \url{https://doi.org/10.1021/jp506633n}.

\bibitem{free-energy-by-average-force}
{\sc E.~Darve and A.~Pohorille}, {\em Calculating free energies using average
  force}, J. Chem. Phys., 115 (2001), pp.~9169--9183,
  \url{https://doi.org/10.1063/1.1410978}.

\bibitem{down1995}
{\sc D.~Down, S.~P. Meyn, and R.~L. Tweedie}, {\em Exponential and uniform
  ergodicity of {Markov} processes}, Ann. Probab., 23 (1995), pp.~1671--1691,
  \url{https://doi.org/10.1214/aop/1176987798}.

\bibitem{DuncanLelievrePavliotis16}
{\sc A.~B. Duncan, T.~Leli{\`e}vre, and G.~Pavliotis}, {\em Variance reduction
  using nonreversible {L}angevin samplers}, J. Stat. Phys., 163 (2016),
  pp.~457--491, \url{https://doi.org/10.1007/s10955-016-1491-2}.

\bibitem{deep-ritz-weinan}
{\sc W.~E and B.~Yu}, {\em The deep {Ritz} method: A deep learning-based
  numerical algorithm for solving variational problems}, Commun. Math. Stat., 6
  (2018), pp.~1--12, \url{https://doi.org/10.1007/s40304-018-0127-z}.

\bibitem{colvar}
{\sc G.~Fiorin, M.~L. Klein, and J.~H{\'e}nin}, {\em Using collective variables
  to drive molecular dynamics simulations}, Mol. Phys., 111 (2013),
  pp.~3345--3362, \url{https://doi.org/10.1080/00268976.2013.813594}.

\bibitem{machine-learning-force-field-cg-variables}
{\sc P.~Gkeka, G.~Stoltz, A.~Barati~Farimani, Z.~Belkacemi, M.~Ceriotti, J.~D.
  Chodera, A.~R. Dinner, A.~L. Ferguson, J.-B. Maillet, H.~Minoux, C.~Peter,
  F.~Pietrucci, A.~Silveira, A.~Tkatchenko, Z.~Trstanova, R.~Wiewiora, and
  T.~Leli{\`e}vre}, {\em Machine learning force fields and coarse-grained
  variables in molecular dynamics: Application to materials and biological
  systems}, J. Chem. Theory Comput., 16 (2020), pp.~4757--4775,
  \url{https://doi.org/10.1021/acs.jctc.0c00355}.

\bibitem{Han-pde-pnas2018}
{\sc J.~Han, A.~Jentzen, and W.~E}, {\em Solving high-dimensional partial
  differential equations using deep learning}, Proc. Natl. Acad. Sci. USA, 115
  (2018), pp.~8505--8510, \url{https://doi.org/10.1073/pnas.1718942115}.

\bibitem{HAN2020-eigenpde}
{\sc J.~Han, J.~Lu, and M.~Zhou}, {\em Solving high-dimensional eigenvalue
  problems using deep neural networks: A diffusion {Monte Carlo} like
  approach}, J. Comput. Phys., 423 (2020), p.~109792,
  \url{https://doi.org/10.1016/j.jcp.2020.109792}.

\bibitem{HAN2019-solve-many-electron-schroedinger-equation}
{\sc J.~Han, L.~Zhang, and W.~E}, {\em Solving many-electron {Schr{\"o}dinger}
  equation using deep neural networks}, J. Comput. Phys., 399 (2019),
  p.~108929, \url{https://doi.org/10.1016/j.jcp.2019.108929}.

\bibitem{overcome-barrier-using-unconstrained-md}
{\sc J.~{H{\'e}nin} and C.~{Chipot}}, {\em {Overcoming free energy barriers
  using unconstrained molecular dynamics simulations}}, J. Chem. Phys., 121
  (2004), pp.~2904--2914, \url{https://doi.org/10.1063/1.1773132}.

\bibitem{dnn-electron-schroedinger-noe}
{\sc J.~Hermann, Z.~Sch{\"a}tzle, and F.~No{\'e}}, {\em Deep-neural-network
  solution of the electronic {Schr{\"o}dinger} equation}, Nat. Chem., 12
  (2020), pp.~891--897, \url{https://doi.org/10.1038/s41557-020-0544-y}.

\bibitem{slepc}
{\sc V.~Hernandez, J.~E. Roman, and V.~Vidal}, {\em Slepc: A scalable and
  flexible toolkit for the solution of eigenvalue problems}, ACM Trans. Math.
  Softw., 31 (2005), p.~351–362,
  \url{https://doi.org/10.1145/1089014.1089019}.

\bibitem{husic-pande-msm}
{\sc B.~E. Husic and V.~S. Pande}, {\em Markov state models: From an art to a
  science}, J. Amer. Chem. Soc., 140 (2018), pp.~2386--2396,
  \url{https://doi.org/10.1021/jacs.7b12191}.

\bibitem{jin-mattheakis-protopapas-pinn-quantum}
{\sc H.~Jin, M.~Mattheakis, and P.~Protopapas}, {\em Physics-informed neural
  networks for quantum eigenvalue problems},  (2022),
  \url{https://arxiv.org/abs/2203.00451}.

\bibitem{Kabsch}
{\sc W.~Kabsch}, {\em {A solution for the best rotation to relate two sets of
  vectors}}, Acta Cryst. A, 32 (1976), pp.~922--923,
  \url{https://doi.org/10.1107/S0567739476001873}.

\bibitem{adam-KingmaB14}
{\sc D.~P. Kingma and J.~Ba}, {\em Adam: {A} method for stochastic
  optimization}, in 3rd International Conference on Learning Representations,
  {ICLR} 2015, San Diego, CA, USA, May 7-9, 2015, Conference Track Proceedings,
  Y.~Bengio and Y.~LeCun, eds., 2015, \url{http://arxiv.org/abs/1412.6980}.

\bibitem{klus-koopman}
{\sc S.~Klus, P.~Koltai, and C.~Sch{\"u}tte}, {\em On the numerical
  approximation of the {Perron-Frobenius and Koopman} operator}, J. Comput.
  Dyn., 3 (2016), pp.~51--79.

\bibitem{EDMD-Klus}
{\sc S.~Klus, F.~N{\"u}ske, P.~Koltai, H.~Wu, I.~Kevrekidis, C.~Sch{\"u}tte,
  and F.~No{\'e}}, {\em Data-driven model reduction and transfer operator
  approximation}, J. Nonlinear Sci., 28 (2018), pp.~985--1010,
  \url{https://doi.org/10.1007/s00332-017-9437-7}.

\bibitem{LAGARIS19971}
{\sc I.~Lagaris, A.~Likas, and D.~Fotiadis}, {\em Artificial neural network
  methods in quantum mechanics}, Comput. Phys. Commun., 104 (1997), pp.~1--14,
  \url{https://doi.org/10.1016/S0010-4655(97)00054-4}.

\bibitem{Laio_2008}
{\sc A.~Laio and F.~L. Gervasio}, {\em Metadynamics: a method to simulate rare
  events and reconstruct the free energy in biophysics, chemistry and material
  science}, Rep. Prog. Phys., 71 (2008), p.~126601,
  \url{https://doi.org/10.1088/0034-4885/71/12/126601}.

\bibitem{Laio12562}
{\sc A.~Laio and M.~Parrinello}, {\em Escaping free-energy minima}, Proc. Natl.
  Acad. Sci. USA, 99 (2002), pp.~12562--12566,
  \url{https://doi.org/10.1073/pnas.202427399}.

\bibitem{latorre2011}
{\sc J.~C. Latorre, P.~Metzner, C.~Hartmann, and C.~Sch\"utte}, {\em A
  structure-preserving numerical discretization of reversible diffusions},
  Commun. Math. Sci., 9 (2011), pp.~1051--1072,
  \url{https://doi.org/10.4310/CMS.2011.v9.n4.a6}.

\bibitem{leimkuhler-constraint-based-training}
{\sc B.~J. Leimkuhler, T.~Pouchon, T.~Vlaar, and A.~J. Storkey}, {\em
  Constraint-based regularization of neural networks}, CoRR, abs/2006.10114
  (2020), \url{https://arxiv.org/abs/2006.10114}.

\bibitem{lelievre_stoltz_2016}
{\sc T.~Leli{\`e}vre and G.~Stoltz}, {\em Partial differential equations and
  stochastic methods in molecular dynamics}, Acta Numer., 25 (2016),
  pp.~681--880, \url{https://doi.org/10.1017/S0962492916000039}.

\bibitem{multiple-projection-mcmc-submanifolds}
{\sc T.~Leli{\`e}vre, G.~Stoltz, and W.~Zhang}, {\em Multiple projection {MCMC}
  algorithms on submanifolds},  (2020), \url{https://arxiv.org/abs/2003.09402}.

\bibitem{EABF}
{\sc A.~Lesage, T.~Leli{\`e}vre, G.~Stoltz, and J.~H{\'e}nin}, {\em Smoothed
  biasing forces yield unbiased free energies with the extended-system adaptive
  biasing force method}, J. Phys. Chem. B, 121 (2017), pp.~3676--3685,
  \url{https://doi.org/10.1021/acs.jpcb.6b10055}.

\bibitem{committor-by-nn}
{\sc Q.~Li, B.~Lin, and W.~Ren}, {\em Computing committor functions for the
  study of rare events using deep learning}, J. Chem. Phys., 151 (2019),
  p.~054112, \url{https://doi.org/10.1063/1.5110439}.

\bibitem{lieb2001analysis}
{\sc E.~H. Lieb and M.~Loss}, {\em Analysis}, Graduate studies in mathematics,
  American Mathematical Society, 2nd~ed., 2001.

\bibitem{schroedinger-eigenpde}
{\sc J.~Lu and Y.~Lu}, {\em A priori generalization error analysis of two-layer
  neural networks for solving high dimensional {Schr{\"o}dinger} eigenvalue
  problems},  (2021), \url{https://arxiv.org/abs/2105.01228}.

\bibitem{vampnet}
{\sc A.~Mardt, L.~Pasquali, H.~Wu, and F.~No{\'e}}, {\em {VAMPnets} for deep
  learning of molecular kinetics}, Nat. Commun., 9 (2018),
  \url{https://doi.org/10.1038/s41467-017-02388-1}.

\bibitem{Marinari_1992}
{\sc E.~Marinari and G.~Parisi}, {\em Simulated tempering: A new {Monte Carlo}
  scheme}, EPL, 19 (1992), pp.~451--458,
  \url{https://doi.org/10.1209/0295-5075/19/6/002}.

\bibitem{marshall-inequalities-book}
{\sc A.~W. Marshall, I.~Olkin, and B.~C. Arnold}, {\em Inequalities: Theory of
  Majorization and its Applications}, vol.~143, Springer, 2nd~ed., 2011,
  \url{https://doi.org/10.1007/978-0-387-68276-1}.

\bibitem{Mattingly2002}
{\sc J.~Mattingly, A.~Stuart, and D.~Higham}, {\em Ergodicity for {SDEs} and
  approximations: locally {Lipschitz} vector fields and degenerate noise},
  Stoch. Proc. Appl., 101 (2002), pp.~185--232,
  \url{https://doi.org/10.1016/S0304-4149(02)00150-3}.

\bibitem{frank_feliks_mms2013}
{\sc F.~No{\'{e}} and F.~N{\"{u}}ske}, {\em A variational approach to modeling
  slow processes in stochastic dynamical systems}, Multiscale Model. Simul., 11
  (2013), pp.~635--655, \url{https://doi.org/10.1137/110858616}.

\bibitem{feliks_variational_jctc_2014}
{\sc F.~N{\"{u}}ske, B.~G. Keller, G.~P{\'{e}}rez-Hern{\'{a}}ndez, A.~Mey, and
  F.~No{\'{e}}}, {\em Variational approach to molecular kinetics}, J. Chem.
  Theory Comput., 10 (2014), pp.~1739--1752,
  \url{https://doi.org/10.1021/ct4009156}.

\bibitem{nuesken-richter}
{\sc N.~N{\"{u}}sken and L.~Richter}, {\em Interpolating between {BSDEs} and
  {PINNs} -- deep learning for elliptic and parabolic boundary value problems},
   (2021), \url{https://arxiv.org/abs/2112.03749}.

\bibitem{oksendalSDE}
{\sc B.~{\O}ksendal}, {\em Stochastic Differential Equations: An Introduction
  with Applications}, Springer, 5th~ed., 2000.

\bibitem{olla:hal-00722537}
{\sc S.~Olla, C.~Landim, and T.~Komorowski}, {\em {Fluctuations in Markov
  Processes. Time Symmetry and Martingale Approximation.}}, Grundlehren der
  Mathematischen Wissenschaften, {Springer}, 2012,
  \url{https://doi.org/10.1007/978-3-642-29880-6}.

\bibitem{Pavliotis2014}
{\sc G.~Pavliotis}, {\em Stochastic Processes and Applications: Diffusion
  Processes, the Fokker--Planck and Langevin Equations}, Springer, 2014.

\bibitem{tica}
{\sc G.~P{\'e}rez-Hern{\'a}ndez, F.~Paul, T.~Giorgino, G.~De~Fabritiis, and
  F.~No{\'e}}, {\em Identification of slow molecular order parameters for
  markov model construction}, J. Chem. Phys., 139 (2013), p.~015102,
  \url{https://doi.org/10.1063/1.4811489}.

\bibitem{peters-trout-2006}
{\sc B.~Peters and B.~L. Trout}, {\em Obtaining reaction coordinates by
  likelihood maximization}, J. Chem. Phys., 125 (2006), p.~054108,
  \url{https://doi.org/10.1063/1.2234477}.

\bibitem{PETERSEN2018296}
{\sc P.~Petersen and F.~Voigtlaender}, {\em Optimal approximation of piecewise
  smooth functions using deep {ReLU} neural networks}, Neural Netw., 108
  (2018), pp.~296--330, \url{https://doi.org/10.1016/j.neunet.2018.08.019}.

\bibitem{ab-initio-many-electron-schrodinger-dnn}
{\sc D.~Pfau, J.~S. Spencer, A.~G. D.~G. Matthews, and W.~M.~C. Foulkes}, {\em
  Ab initio solution of the many-electron {Schr{\"o}dinger} equation with deep
  neural networks}, Phys. Rev. Research, 2 (2020), p.~033429,
  \url{https://doi.org/10.1103/PhysRevResearch.2.033429}.

\bibitem{namd-paper}
{\sc J.~C. Phillips, D.~J. Hardy, J.~D.~C. Maia, J.~E. Stone, J.~V. Ribeiro,
  R.~C. Bernardi, R.~Buch, G.~Fiorin, J.~H{\'e}nin, W.~Jiang, R.~McGreevy,
  M.~C.~R. Melo, B.~K. Radak, R.~D. Skeel, A.~Singharoy, Y.~Wang, B.~Roux,
  A.~Aksimentiev, Z.~Luthey-Schulten, L.~V. Kal{\'e}, K.~Schulten, C.~Chipot,
  and E.~Tajkhorshid}, {\em Scalable molecular dynamics on {CPU and GPU}
  architectures with {NAMD}}, J. Chem. Phys., 153 (2020), p.~044130,
  \url{https://doi.org/10.1063/5.0014475}.

\bibitem{msm_generation}
{\sc J.-H. Prinz, H.~Wu, M.~Sarich, B.~Keller, M.~Senne, M.~Held, J.~D.
  Chodera, C.~Sch{\"u}tte, and F.~No{\'e}}, {\em Markov models of molecular
  kinetics: Generation and validation}, J. Chem. Phys., 134 (2011), 174105,
  p.~174105, \url{https://doi.org/10.1063/1.3565032}.

\bibitem{pmlr-v80-qu18a}
{\sc C.~Qu, Y.~Li, and H.~Xu}, {\em Non-convex conditional gradient sliding},
  in Proceedings of the 35th International Conference on Machine Learning,
  vol.~80, PMLR, 2018, pp.~4208--4217,
  \url{http://proceedings.mlr.press/v80/qu18a.html}.

\bibitem{reed1978methods}
{\sc M.~Reed and B.~Simon}, {\em Methods of Modern Mathematical Physics, IV:
  Analysis of Operators}, Academic Press, 1978.

\bibitem{reed1981functional}
{\sc M.~Reed and B.~Simon}, {\em Methods of Modern Mathematical Physics, I:
  Functional Analysis}, Elsevier Science, 1981.

\bibitem{RHEE2003775}
{\sc Y.~M. Rhee and V.~S. Pande}, {\em Multiplexed-replica exchange molecular
  dynamics method for protein folding simulation}, Biophys. J., 84 (2003),
  pp.~775--786, \url{https://doi.org/10.1016/S0006-3495(03)74897-8}.

\bibitem{Ruhe1970}
{\sc A.~Ruhe}, {\em Perturbation bounds for means of eigenvalues and invariant
  subspaces}, BIT Numer. Math., 10 (1970), pp.~343--354,
  \url{https://doi.org/10.1007/BF01934203}.

\bibitem{SchuetteHuisingaDeuflhard2001}
{\sc C.~Sch{\"u}tte, W.~Huisinga, and P.~Deuflhard}, {\em Transfer operator
  approach to conformational dynamics in biomolecular systems}, in Ergodic
  Theory, Analysis, and Efficient Simulation of Dynamical Systems, B.~Fiedler,
  ed., 2001, pp.~191--223.

\bibitem{kernel-tica}
{\sc C.~R. Schwantes and V.~S. Pande}, {\em Modeling molecular kinetics with
  tica and the kernel trick}, J. Chem. Theory Comput., 11 (2015), p.~600—608,
  \url{https://doi.org/10.1021/ct5007357}.

\bibitem{hythem-chen-ferguson-2020}
{\sc H.~Sidky, W.~Chen, and A.~L. Ferguson}, {\em Machine learning for
  collective variable discovery and enhanced sampling in biomolecular
  simulation}, Mol. Phys., 118 (2020), p.~e1737742,
  \url{https://doi.org/10.1080/00268976.2020.1737742}.

\bibitem{sultan-kiss-shukla-pande-2014}
{\sc M.~M. Sultan, G.~Kiss, D.~Shukla, and V.~S. Pande}, {\em Automatic
  selection of order parameters in the analysis of large scale molecular
  dynamics simulations}, J. Chem. Theory Comput., 10 (2014), pp.~5217--5223,
  \url{https://doi.org/10.1021/ct500353m}.

\bibitem{sultan-pande-automate-design-cv}
{\sc M.~M. Sultan and V.~S. Pande}, {\em Automated design of collective
  variables using supervised machine learning}, J. Chem. Phys., 149 (2018),
  p.~094106, \url{https://doi.org/10.1063/1.5029972}.

\bibitem{teschl2009mathematical}
{\sc G.~Teschl}, {\em Mathematical Methods in Quantum Mechanics: With
  Applications to Schr{\"o}dinger Operators}, Graduate studies in mathematics,
  American Mathematical Society, 2009.

\bibitem{tpt_eric}
{\sc E.~Vanden-Eijnden}, {\em Transition path theory}, in Computer Simulations
  in Condensed Matter Systems: From Materials to Chemical Biology Volume 1,
  M.~Ferrario, G.~Ciccotti, and K.~Binder, eds., vol.~703 of Lecture Notes in
  Physics, Springer Berlin Heidelberg, 2006, pp.~453--493.

\bibitem{edmd}
{\sc M.~O. Williams, I.~G. Kevrekidis, and C.~W. Rowley}, {\em A data–driven
  approximation of the {Koopman} operator: Extending dynamic mode
  decomposition}, J. Nonlinear Sci., 25 (2015), pp.~1307--1346,
  \url{https://doi.org/10.1007/s00332-015-9258-5}.

\bibitem{vamp}
{\sc H.~Wu and F.~No{\'e}}, {\em Variational approach for learning {Markov}
  processes from time series data}, J. Nonlinear Sci.,  (2020), pp.~23--66,
  \url{https://doi.org/10.1007/s00332-019-09567-y}.

\bibitem{zhang2017}
{\sc W.~Zhang}, {\em {Ergodic SDEs on submanifolds and related numerical
  sampling schemes}}, ESAIM: Math. Model. Num., 54 (2020), pp.~391--430,
  \url{https://doi.org/10.1051/m2an/2019071}.

\bibitem{effective_dyn_2017}
{\sc W.~Zhang, C.~Hartmann, and C.~Sch{\"u}tte}, {\em Effective dynamics along
  given reaction coordinates{,} and reaction rate theory}, Faraday Discuss.,
  195 (2016), pp.~365--394, \url{https://doi.org/10.1039/C6FD00147E}.

\bibitem{sz-entropy-2017}
{\sc W.~Zhang and C.~Sch{\"u}tte}, {\em Reliable approximation of long
  relaxation timescales in molecular dynamics}, Entropy, 19 (2017),
  \url{https://doi.org/10.3390/e19070367}.

\end{thebibliography}
\end{document}